\newtheorem{theorem}{Theorem}[section]
\newtheorem{definition}[theorem]{Definition}
\newtheorem{lemma}[theorem]{Lemma}
\newtheorem{proposition}[theorem]{Proposition}
\newtheorem{remark}[theorem]{Remark}
\newtheorem{example}[theorem]{Example}
\newcommand{\hh}{{\mathbb{H}}}
\newcommand{\s}{{\mathbb{S}}}
\newcommand{\cc}{{\mathbb{C}}}
\newcommand{\rr}{{\mathbb{R}}}
\newcommand{\B}{{\mathbb{B}}}
\newcommand{\us}{\mathsf{us}}
\newcommand{\wsh}{\mathsf{wsh}}
\newcommand{\ssh}{\mathsf{ssh}}
\newcommand{\jpsh}{\mathsf{psh}_\jj}
\newcommand{\jj}{{\mathbb{J}}}
\newcommand{\cp}{{\mathbb{CP}}}
\newcommand{\M}{{\mathcal{M}}}
\title{\bf Some notions of subharmonicity over the quaternions}
\author{
Caterina Stoppato\\ 
\small Dipartimento di Matematica e Informatica ``U. Dini'', Universit\`a di Firenze \\
\small Viale Morgagni 67/A, I-50134 Firenze, Italy\\
\small stoppato@math.unifi.it
}
\date{  }
\begin{document}


\maketitle

\begin{abstract}
This works introduces several notions of subharmonicity for real-valued functions of one quaternionic variable. These notions are related to the theory of slice regular quaternionic functions introduced by Gentili and Struppa in 2006. The interesting properties of these new classes of functions are studied and applied to construct the analogs of Green's functions.
\end{abstract}

\vspace{.4em} {\small
\noindent{\bf Acknowledgements.} This work stems from a question posed by Filippo Bracci at Universit\`a di Roma ``Tor Vergata'', where the author was the recipient of the ``Michele Cuozzo'' prize. The author warmly thanks the Cuozzo family, Universit\`a di Roma ``Tor Vergata'' and Filippo Bracci for the remarkable research opportunity.\\
The author is partly supported by GNSAGA of INdAM and by Finanziamento Premiale FOE 2014 ``Splines for accUrate NumeRics: adaptIve models for Simulation Environments'' of MIUR.}

\section{Introduction}

Let $\hh = \rr+i\rr+j\rr+k\rr$ denote the real algebra of quaternions and let 
\[\s := \{q \in \hh : q^2=-1\} = \{\alpha i+\beta j+\gamma k : \alpha^2+\beta^2+\gamma^2=1\}\]
denote the $2$-sphere of quaternionic imaginary units. For each $I \in\s$, the subalgebra $L_I=\rr+I \rr$ generated by $1$ and $I$ is isomorphic to $\cc$. In recent years, this elementary fact has been the basis for the introduction of a theory of quaternionic functions. 

\begin{definition}[\cite{advances}]\label{regularfunction}
Let $f$ be a quaternion-valued function defined on a domain $\Omega$. For each $I \in \s$, let $\Omega_I = \Omega \cap L_I$ and let $f_I = f_{|_{\Omega_I}}$ be the restriction of $f$ to $\Omega_I$. The restriction $f_I$ is called \emph{holomorphic} if it has continuous partial derivatives and
\begin{equation}
\frac{1}{2} \left( \frac{\partial}{\partial x} + I \frac{\partial}{\partial y} \right) f_I(x+yI) \equiv 0.
\end{equation}
The function $f$ is called \emph{(slice) regular} if, for all $I \in \s$, $f_I$ is holomorphic.
\end{definition}

The study of regular quaternionic functions has then grown into a full theory, described in the monograph~\cite{librospringer}. It resembles the theory of holomorphic complex functions, but in a many-sided way that reflects the richness of the non-commutative setting. 

In the present work, we consider several notions of subharmonicity related to the class of regular quaternionic functions. In contrast with the work~\cite{pharmonicity}, which studies the relation between quaternion-valued (or Clifford-valued) regular functions and real harmonicity, we consider real-valued functions of a quaternionic variable and look for new notions of subharmonicity compatible with composition with regular functions.

The first attempt is \emph{$\jj$-plurisubharmonicity}. However, this property is quite restrictive, besides being preserved by composition with a regular function $f$ only if $f$ is \emph{slice preserving}, that is, if $f(\Omega_I)\subseteq L_I$ for all $I\in\s$.

For this reason, the alternative notions of \emph{weakly subharmonic} and \emph{strongly subharmonic} function are introduced. Composition with regular functions turns out to map strongly subharmonic functions into weakly subharmonic ones. Moreover, composition with slice preserving regular functions is proven to preserve weak subharmonicity.

These new notions of subharmonicity turn out to have many nice properties that recall the complex and pluricomplex cases, including mean-value properties and versions of the maximum modulus principle.

These results are finally applied to construct quaternionic analogs of Green's functions, which reveal many peculiarities due to the non-commutative setting.


\section{Prerequisites}\label{sectiondifferential}

Let us recall a few properties of the algebra of quaternions $\hh$, on which we consider the standard Euclidean metric and topology.

\begin{itemize}
\item For each $I \in\s$, the couple $1,I$ can be completed to a (positively oriented) orthonormal basis $1,I,J,K$ by choosing $J \in \s$ with $I \perp J$ and setting $K = IJ$. 
\item The coordinates of any $q\in \hh$ with respect to such a basis can be recovered as
\begin{align*}
x_0(q)&=\frac{1}{4}(q-IqI-J qJ-KqK)\\
x_1(q)&=\frac{1}{4I}(q-I qI+JqJ+KqK)\\
x_2(q)&=\frac{1}{4J}(q+I qI-J qJ+KqK)\\ 
x_3(q)&=\frac{1}{4K}(q+IqI+JqJ-KqK).
\end{align*}
\item Mapping each $v \in T_{q_0}\hh \cong \hh$ to $Iv$ for all $q_0 \in \hh$ defines an (orthogonal) complex structure on $\hh$, called \emph{constant}. A biholomorphism between $(\hh,I)$ and $(L_I^2,I)\cong(\cc^2,i)$ can be constructed by mapping each $q$ to $(z_1(q),z_2(q))$, where
\begin{align*}
z_1(q)&=x_0(q)+Ix_1(q) = (q-IqI) \frac{1}{2},\\
z_2(q)&=x_2(q)+Ix_3(q) =  (q+IqI) \frac{1}{2J},
\end{align*}
are such that $z_1(q)+z_2(q)J = q$. Both $z_1$ and $z_2$ depend on the choice of $I$; $z_2$ also depends on $J$, but only up to a multiplicative constant $c\in L_I$.
\end{itemize}

For every domain $\Omega$ and every function $f : \Omega \to \hh$, let us denote by $f=f_1+f_2 J$ the corresponding decomposition with $f_1,f_2$ ranging in $L_I$. Furthermore $\partial_1,\partial_2, \bar \partial_1, \bar \partial_2:C^1(\Omega,L_I)\to C^0(\Omega,L_I)$ will denote the corresponding complex derivatives. In other words,
\begin{align*}
\partial_1 &=\frac{1}{2}\left(\frac{\partial}{\partial x_0} - I \frac{\partial}{\partial x_1}\right)\\
\bar \partial_1 &=\frac{1}{2}\left(\frac{\partial}{\partial x_0} + I \frac{\partial}{\partial x_1}\right)\\
\partial_2 &=\frac{1}{2}\left(\frac{\partial}{\partial x_2} - I \frac{\partial}{\partial x_3}\right)\\
\bar \partial_2 &=\frac{1}{2}\left(\frac{\partial}{\partial x_2} + I \frac{\partial}{\partial x_3}\right).
\end{align*}
We notice that these derivatives commute with each other, and that $\partial_1,\bar \partial_1$ depend only on $I$, while $\partial_2, \bar \partial_2$ depend on both $I$ and $J$. 

The definition of regular function (Definition~\ref{regularfunction}) amounts to requiring that the restriction to $\Omega_I$ be holomorphic from $(\Omega_I, I)$ to $(\hh,I)$ for all $I \in \s$. Curiously, if the domain is carefully chosen then a stronger property holds.

\begin{definition}
Let $\Omega$ be a domain in $\hh$. $\Omega$ is a \emph{slice domain} if it intersects the real axis $\rr$ and if, for all $I \in \s$, the intersection $\Omega_I$ with the complex plane $L_I$ is a connected. Moreover, $\Omega$ is termed \emph{symmetric} if it is axially symmetric with respect to the real axis $\rr$.
\end{definition}

If we denote by $\partial_cf$ the \emph{slice derivative} 
\begin{equation*}
\partial_cf(x+Iy) = \frac{1}{2} \left( \frac{\partial}{\partial x} - I \frac{\partial}{\partial y} \right) f_I(x+yI) 
\end{equation*}
introduced in \cite{advances} and by $\partial_sf$ the \emph{spherical derivative} 
\begin{equation*}
\partial_sf(q) =(q - \bar q)^{-1} \left(f(q) -f(\bar q)\right)
\end{equation*}
introduced in \cite{perotti}, then the aforementioned property can be stated as follows.

\begin{theorem}[\cite{expansion}]
Let $\Omega$ be a symmetric slice domain, let $f : \Omega \to \hh$ be a regular function and let $q_0 \in \Omega$. Chosen $I,J \in \s$ so that $q_0 \in L_I$ and $I \perp J$, let $z_1,z_2,\bar z_1, \bar z_2$ be the induced coordinates and let $\partial_1,\partial_2,\bar \partial_1, \bar \partial_2$ be the corresponding derivations. Then
\begin{equation}\label{complexholomorphy}
\left.\left( \begin{array}{cc}
\bar\partial_1 f_1 & \bar \partial_2 f_1\\
\bar \partial_1 f_2 & \bar \partial_2 f_2
\end{array}\right)\right|_{q_0} =
\left( \begin{array}{cc}
0 & 0 \\
0 & 0
\end{array}\right).
\end{equation}
Furthermore, if $q_0 \not \in \rr$ then
\begin{equation}\label{complexjacobian}
\left.\left( \begin{array}{cc}
\partial_1 f_1 & \partial_2 f_1 \\
\partial_1 f_2 & \partial_2 f_2
\end{array}\right)\right|_{q_0} =
\left( \begin{array}{cr}
\partial_cf_1(q_0) & - \overline{\partial_sf_2(q_0)} \\
\partial_cf_2(q_0) & \overline{\partial_sf_1(q_0)}
\end{array}\right).
\end{equation}
If, on the contrary, $q_0 \in \rr$ then
\begin{equation}
\left.\left( \begin{array}{cc}
\partial_1 f_1 & \partial_2 f_1 \\
\partial_1 f_2 & \partial_2 f_2
\end{array}\right)\right|_{q_0} =
\left( \begin{array}{cr}
\partial_cf_1(q_0) & - \overline{\partial_cf_2(q_0)} \\
\partial_cf_2(q_0) & \overline{\partial_cf_1(q_0)}
\end{array}\right).
\end{equation}
\end{theorem}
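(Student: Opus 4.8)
The plan is to reduce everything to the companion pair of real-parameter functions furnished by the Representation Formula for regular functions on symmetric slice domains (see~\cite{librospringer}). Writing a point as $x+I'y$ with $I'\in\s$, recall that there exist functions $F_0,F_1$, independent of $I'$, with $F_0$ even and $F_1$ odd in $y$, such that $f(x+I'y)=F_0(x,y)+I'F_1(x,y)$, and that regularity is equivalent to the Cauchy--Riemann system $\partial_xF_0=\partial_yF_1$, $\partial_yF_0=-\partial_xF_1$. From the definitions one reads off the two identities I will lean on throughout: $\partial_cf(x+I'y)=\partial_xF_0+I'\partial_xF_1$ and $\partial_sf(x+I'y)=F_1/y$.

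First I would settle the entire first column, together with the vanishing of $\bar\partial_1f_1,\bar\partial_1f_2$, all at once. These quantities involve only $\partial_{x_0},\partial_{x_1}$, hence only the restriction $f|_{\Omega_I}$. Decomposing $F_0=F_0^1+F_0^2J$ and $F_1=F_1^1+F_1^2J$ into $L_I$-components, the Cauchy--Riemann system holds componentwise, and a direct expansion of $\frac12(\partial_{x_0}\pm I\partial_{x_1})(F_0^k+IF_1^k)$ yields $\bar\partial_1f_k=0$ and $\partial_1f_k=\partial_cf_k$ for $k=1,2$, on all of $\Omega_I$ and in particular at $q_0$. This disposes of the left column of every matrix, with no distinction between real and non-real $q_0$.

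The substance lies in the transverse second column, which requires the behaviour of $f$ off $L_I$. Suppose first $q_0\notin\rr$, say $q_0=x_0+Iy_0$ with $y_0>0$. Near $q_0$ I would write $f(q)=F_0(x_0,r)+I_qF_1(x_0,r)$ with $r=|\mathrm{Im}(q)|$ and $I_q=\mathrm{Im}(q)/r$, which is smooth there. Since $\partial_{x_2}r=x_2/r$ and $\partial_{x_3}r=x_3/r$ both vanish at $q_0$, the derivatives of $F_0,F_1$ drop out and only those of the unit $I_q$ survive; a short computation gives $\partial_{x_2}I_q|_{q_0}=J/y_0$ and $\partial_{x_3}I_q|_{q_0}=K/y_0$. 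Combined with $F_1(x_0,y_0)=y_0\,\partial_sf(q_0)$ this yields the compact identities $\partial_{x_2}f|_{q_0}=J\,\partial_sf(q_0)$ and $\partial_{x_3}f|_{q_0}=K\,\partial_sf(q_0)$. Re-expanding $J\,\partial_sf$ and $K\,\partial_sf=IJ\,\partial_sf$ in the basis $1,J$ by means of the rule $Jz=\bar zJ$ for $z\in L_I$, and assembling $\partial_2=\frac12(\partial_{x_2}-I\partial_{x_3})$ and $\bar\partial_2=\frac12(\partial_{x_2}+I\partial_{x_3})$, I expect the $\bar\partial_2$ combinations to cancel, completing the holomorphy statement, and the $\partial_2$ combinations to produce exactly $\partial_2f_1=-\overline{\partial_sf_2(q_0)}$ and $\partial_2f_2=\overline{\partial_sf_1(q_0)}$.

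The case $q_0\in\rr$ is where I expect the real obstacle, since there $I_q$ is undefined and $F_1/y$ is indeterminate. Here I would differentiate along the individual slices instead: moving in the $x_2$- and $x_3$-directions means moving inside $L_J$ and $L_K$, so $\partial_{x_2}f|_{q_0}=\partial_yF_0(x_0,0)+J\,\partial_yF_1(x_0,0)$ and similarly with $K$. The evenness of $F_0$ kills $\partial_yF_0(x_0,0)$, while the Cauchy--Riemann system gives $\partial_yF_1(x_0,0)=\partial_xF_0(x_0,0)=\partial_cf(q_0)$; this simultaneously shows $\partial_sf(q_0)=\partial_cf(q_0)$ on the real axis, which is precisely why $\partial_s$ is replaced by $\partial_c$ in the last matrix. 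Repeating the $1,J$-expansion and the assembly of $\partial_2,\bar\partial_2$ then gives $\bar\partial_2f_k=0$ and the second column $\bigl(-\overline{\partial_cf_2(q_0)},\ \overline{\partial_cf_1(q_0)}\bigr)$. The two genuinely delicate points are thus the off-slice differentiation of the imaginary unit $I_q$ in the non-real case and the separate, limit-based treatment of the real case; the non-commutative bookkeeping through $Jz=\bar zJ$ is routine but must be executed carefully to place the conjugations and signs correctly.
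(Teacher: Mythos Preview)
The paper does not contain a proof of this theorem: it is quoted from~\cite{expansion} as a prerequisite and stated without argument. So there is no ``paper's own proof'' to compare against; I can only assess your proposal on its merits.

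Your argument is correct. The first-column part is immediate once you note that $f_k=F_0^k+IF_1^k$ on $\Omega_I$ and that the Cauchy--Riemann system for $(F_0,F_1)$ splits componentwise; this yields $\bar\partial_1 f_k=0$ and $\partial_1 f_k=\partial_c f_k$ exactly as you say. For the second column at a non-real $q_0=x_0+Iy_0$ (one may always arrange $y_0>0$), the key observation that $\partial_{x_2}r$ and $\partial_{x_3}r$ vanish at $q_0$ is right, and your values $\partial_{x_2}I_q|_{q_0}=J/y_0$, $\partial_{x_3}I_q|_{q_0}=K/y_0$ are correct. Carrying out the $1,J$-expansion via $Jz=\bar zJ$ gives
\[
\partial_{x_2}f|_{q_0}=-\overline{\partial_sf_2}+\overline{\partial_sf_1}\,J,\qquad
\partial_{x_3}f|_{q_0}=-I\,\overline{\partial_sf_2}+I\,\overline{\partial_sf_1}\,J,
\]
and assembling $\partial_2,\bar\partial_2$ yields exactly the claimed entries and the vanishing of $\bar\partial_2 f_k$. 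At a real $q_0$, your slice-by-slice differentiation is the right substitute: moving along $x_2$ lands you in $L_J$, so $\partial_{x_2}f|_{q_0}=\partial_yF_0(x_0,0)+J\,\partial_yF_1(x_0,0)$; evenness of $F_0$ kills the first term, oddness of $F_1$ gives $\partial_xF_1(x_0,0)=0$, and the Cauchy--Riemann relation $\partial_yF_1=\partial_xF_0$ identifies $\partial_yF_1(x_0,0)$ with $\partial_cf(q_0)$. The rest is the same bookkeeping as before. The only cosmetic points to tighten are: state explicitly that regularity on a symmetric slice domain makes $F_0,F_1$ real-analytic (so the parity arguments and the derivatives at $y=0$ are legitimate), and note that the case $y_0<0$ is handled by replacing $I$ with $-I$.
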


We point out that we have not proven that $f$ is holomorphic with respect to the constant structure $I$: with respect to the basis $1,I,J, IJ$, equality \eqref{complexholomorphy} only holds at those points $q_0$ that lie in $L_I$. In fact, regularity is related to a different notion of holomorphy, which involves non-constant orthogonal complex structures. Let us recall the notations $Re(q) = x_0(q), Im(q) = q-Re(q)$ for $q \in \hh$ and let us set
\begin{equation*}
\jj_{q_0}v := \frac{Im(q_0)}{|Im(q_0)|} v\qquad \forall\ v \in T_{q_0}(\hh\setminus\rr) \cong \hh.
\end{equation*}
Then $\pm\jj$ are orthogonal complex structures on 
\begin{equation*}
\hh\setminus\rr = \bigcup_{I \in \s} (\rr+I\rr^+) 
\end{equation*}
and they are induced by the natural identification with the complex manifold $\cp^1 \times (\rr+i\rr^+)$.

\begin{theorem}[\cite{ocs}]\label{ocs}
Let $\Omega$ be a symmetric slice domain, and let $f: \Omega \to \hh$ be an injective regular function. Then the real differential of $f$ is invertible at each $q \in \Omega$ and the push-forward of $\jj$ via $f$, that is,
\begin{equation}
\jj^f_{f(q)}v =  \frac{Im(q)}{|Im(q)|} v \qquad \forall\ v \in T_{f(q)} f(\Omega\setminus \rr) \cong \hh\,,
\end{equation}
is an orthogonal complex structure on $f(\Omega \setminus \rr)$.
\end{theorem}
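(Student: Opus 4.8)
The plan is to reduce both assertions to a single analytic fact—invertibility of $df$—and then to extract that fact from injectivity by a degree argument on the real axis and a rigidity argument off it.

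First I would dispose of the complex-structure statement \emph{assuming} invertibility. Fix $q\in\Omega\setminus\rr$ and choose $I,J\in\s$ with $q\in L_I$, $I\perp J$. The vanishing of $\bar\partial_1 f_1,\bar\partial_2 f_1,\bar\partial_1 f_2,\bar\partial_2 f_2$ at $q$ in \eqref{complexholomorphy} says exactly that $df_q\colon(\hh,I)\to(\hh,I)$ is $\cc$-linear for the constant structure given by left multiplication $L_I$, i.e. $df_q\circ L_I=L_I\circ df_q$. Since $q\in L_I$ we have $\jj_q=L_I$, so $df_q\circ\jj_q\circ(df_q)^{-1}=L_I=L_{Im(q)/|Im(q)|}$, which is precisely the asserted formula for $\jj^f$. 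Left multiplication by a unit imaginary quaternion squares to $-\mathrm{Id}$ and is an isometry, so $\jj^f_{f(q)}$ is an orthogonal complex structure; injectivity makes $q\mapsto f(q)$ a bijection onto $f(\Omega\setminus\rr)$, so the field is well defined, and once $f$ is a diffeomorphism it is smooth. Everything thus rests on invertibility.

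Next I would reduce invertibility to a determinant. Because $df_q$ is $\cc$-linear for $L_I$ at every point—by \eqref{complexholomorphy} off $\rr$ and by the last matrix identity of the theorem on $\rr$—we have $\det_{\rr}df_q=|\det_{\cc}df_q|^2\ge 0$, so $f$ is orientation preserving wherever nonsingular, and invertibility is equivalent to $\det_{\cc}df_q\ne 0$. From \eqref{complexjacobian}, for $q\notin\rr$ this equals $\partial_c f_1\,\overline{\partial_s f_1}+\partial_c f_2\,\overline{\partial_s f_2}$, and on $\rr$ it equals $|\partial_c f(q)|^2$. Injectivity immediately yields one ingredient: for $q\notin\rr$ one has $q\ne\bar q$, hence $f(q)\ne f(\bar q)$, hence $\partial_s f(q)=(q-\bar q)^{-1}(f(q)-f(\bar q))\ne 0$. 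On the real axis I would argue by local degree: if $\partial_c f(q_0)=0$ at $q_0\in\rr$, the Taylor expansion $f(q)=f(q_0)+\sum_{n\ge k}(q-q_0)^n a_n$ of the regular function has leading term $(q-q_0)^k a_k$ with $k\ge 2$ and $a_k\ne 0$, which dominates on a small sphere about $q_0$; the local degree of $f$ at $q_0$ then equals that of $q\mapsto q^k$, and counting preimages of a generic non-real value (all on a single slice, each of local sign $+1$ because $\det_{\rr}df\ge 0$) gives degree $k\ge 2$. But an injective continuous map is, by invariance of domain, a homeomorphism onto an open set and so has local degree $\pm 1$ everywhere—a contradiction. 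Hence $\partial_c f\ne 0$ on $\rr$.

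The main obstacle is the off-axis case: ruling out $\partial_c f_1\overline{\partial_s f_1}+\partial_c f_2\overline{\partial_s f_2}=0$ at $q_0\notin\rr$ where $\partial_s f(q_0)\ne 0$. Writing $f(x+yI)=\alpha(x,y)+I\beta(x,y)$ with $\beta=y\,\partial_s f$, the differential maps the slice directions onto the left-$L_I$-line through $\partial_c f(q_0)$ and the sphere directions $T_I\s$ onto the left-$L_I$-line through $J\beta$, so invertibility is the transversality of these two planes, and its failure is an infinitesimal collision between a slice direction and a motion of the image $2$-spheres $\alpha(x,y)+\s\,\beta(x,y)$. Since there is no honest quaternionic power series to expand here, the degree argument does not transcribe directly; the route I would take is to pass to the twistor realization in which $(\hh\setminus\rr,\jj)$ embeds as an open subset of the complex manifold $\cp^1\times(\rr+i\rr^+)$, to check (again from \eqref{complexholomorphy}) that $f$ lifts to a genuinely holomorphic map of complex surfaces, and then to invoke the rigidity theorem that an injective holomorphic map between equidimensional complex manifolds is biholomorphic onto its image, so that its differential, and hence $df_{q_0}$, is nonsingular. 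Establishing the holomorphicity of this lift with respect to a \emph{fixed} target structure—thereby breaking the apparent circularity with the very definition of $\jj^f$—is the step I expect to require the most care.
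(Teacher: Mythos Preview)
The paper does not prove this theorem: it is stated as a prerequisite and attributed to the reference~\cite{ocs} (Gentili, Salamon, Stoppato), with no argument given. There is therefore no ``paper's own proof'' to compare your proposal against.

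Assessed on its own, your outline follows the strategy of the original source. The reduction of the orthogonal-complex-structure claim to invertibility of $df$ is correct and uses exactly the $\cc$-linearity encoded in~\eqref{complexholomorphy}. Your determinant reformulation and the degree argument on the real axis are sound. The off-axis case is indeed where the work lies, and your diagnosis is accurate: one must lift $f$ through the twistor picture to a holomorphic map into a \emph{fixed} complex target and then apply the theorem that an injective holomorphic map between equidimensional complex manifolds has invertible differential. In~\cite{ocs} this is carried out by lifting to the twistor space $\cp^3$ of $S^4\supset\hh$ with its standard complex structure, which is defined independently of $f$ and so avoids the circularity you flag. You stop short of executing this step, so what you have is a correct plan rather than a proof; the missing piece is precisely the construction and holomorphicity of that lift, which is the technical core of the cited paper.
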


In the hypotheses of the previous theorem, $f$ is (obviously) a holomorphic map from $(\Omega \setminus \rr, \jj)$ to $\left(f(\Omega \setminus \rr), \jj^f\right)$. Furthermore, there is a special class of regular functions such that $\jj^f = \jj$.

\begin{remark}\label{J-holomorphic}
Let $f: \Omega \to \hh$ be a \emph{slice preserving} regular function, namely a regular function such that $f(\Omega_I)\subseteq L_I$ for all $I \in \s$. Then $f(\Omega \setminus \rr) = f(\Omega) \setminus \rr$ and $f$ is a holomorphic map from $(\Omega \setminus \rr, \jj)$ to $\left( f(\Omega) \setminus \rr, \jj\right)$
\end{remark}


\section{Quaternionic notions of subharmonicity}\label{sectionsubharmonicity}

Let $\Omega$ be a domain in $\hh$ and let 
\begin{equation*}
\us(\Omega) = \{u: \Omega \to [-\infty, +\infty), u \mathrm{\ upper\ semicontinuous,\ } u \not \equiv -\infty\}.
\end{equation*}
For $u \in \us(\Omega)$, we aim at defining some notion of subharmonicity that behaves well when we compose $u$ with a regular function. Remark \ref{J-holomorphic} encourages us to consider \emph{$\jj$-plurisubharmonic} and \emph{$\jj$-harmonic} functions, i.e., functions that are pluri(sub)harmonic with respect to the complex structure $\jj$. However, the notion of $\jj$-plurisubharmonicity on a symmetric slice domain $\Omega$ is induced by plurisubharmonicity in $\cp^1 \times D_{\Omega}$ with \begin{equation*}
D_{\Omega}=\{x+iy \in \rr+i\rr^+ : x+y\s \subset\Omega\},
\end{equation*}
which amounts to constance in the first variable and subharmonicity in the second variable. We conclude:

\begin{proposition}
Let $\Omega$ be a symmetric domain in $\hh$. A function $u \in \us(\Omega)$ is $\jj$-plurisubharmonic in $\Omega \setminus \rr$ if, and only if, there exists a subharmonic function $\upsilon : D_{\Omega} \to \rr$ such that $u(x+Iy) = \upsilon(x+iy)$ for all $I \in \s$ and for all $x+iy 	\in  D_{\Omega}$. In particular, a function $u \in C^2(\Omega,\rr)$ is $\jj$-plurisubharmonic in $\Omega \setminus \rr$ if, and only if, $u(x+Iy)$ does not depend on $I$ and
\begin{equation}\label{weaksubharmonicity}
\left(\frac{\partial^2}{\partial x^2} + \frac{\partial^2}{\partial y^2} \right) u(x+Iy) \geq 0.
\end{equation}
\end{proposition}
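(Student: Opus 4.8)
The plan is to transfer the whole statement to the product complex manifold $\cp^1\times D_\Omega$, exploiting the identification recalled immediately above: under it a point $x+Iy$ with $y>0$ corresponds to the pair $(I,x+iy)$, where the factor $\cp^1$ is the sphere $\s$ equipped with the complex structure induced by $\jj$, and the factor $D_\Omega\subset\rr+i\rr^+$ parametrizes each half-slice $L_I\cap(\Omega\setminus\rr)$. Since by definition $u$ is $\jj$-plurisubharmonic precisely when the corresponding function on $\cp^1\times D_\Omega$ is plurisubharmonic, the proposition reduces to the classical characterization of plurisubharmonic functions on a product in which one factor is compact. I would carry this out in both directions and then specialize to the $C^2$ case.

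For the forward implication, I would first restrict a $\jj$-plurisubharmonic $u$ to the slices $\cp^1\times\{x+iy\}$. On each such slice $u$ becomes subharmonic on the \emph{compact} Riemann surface $\cp^1$; by the maximum principle for upper semicontinuous subharmonic functions on a compact connected Riemann surface without boundary, it must be constant. This produces a well-defined function $\upsilon(x+iy):=u(x+Iy)$ on $D_\Omega$, independent of $I$. I expect this to be the crux of the argument, since it is exactly here that the compactness of the sphere of imaginary units forces the rigidity in the first variable. Restricting instead to the complementary slices $\{I\}\times D_\Omega$ exhibits $\upsilon$ as the restriction of a plurisubharmonic function to a complex submanifold, hence as a subharmonic function on $D_\Omega$.

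For the converse, given a subharmonic $\upsilon:D_\Omega\to\rr$, I would observe that the associated $u$ equals $\upsilon\circ\pi$, where $\pi:\cp^1\times D_\Omega\to D_\Omega$ is the holomorphic projection onto the second factor. Since a one-variable subharmonic function is plurisubharmonic and plurisubharmonicity is preserved under composition with a holomorphic map, $u=\upsilon\circ\pi$ is plurisubharmonic on the product, that is, $\jj$-plurisubharmonic on $\Omega\setminus\rr$; upper semicontinuity is inherited directly from $\upsilon$.

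Finally, for the $C^2$ refinement I would combine the equivalence just established with the standard fact that a $C^2$ function on a planar domain is subharmonic if and only if its Laplacian is nonnegative. Necessity of independence from $I$ has already been shown, and since $u(x+Iy)=\upsilon(x+iy)$ slice by slice, the planar Laplacian of $\upsilon$ coincides with $\left(\tfrac{\partial^2}{\partial x^2}+\tfrac{\partial^2}{\partial y^2}\right)u(x+Iy)$; thus the sign condition on $\Delta\upsilon$ translates verbatim into \eqref{weaksubharmonicity}. Conversely, independence from $I$ together with \eqref{weaksubharmonicity} makes $\upsilon$ a $C^2$ subharmonic function on $D_\Omega$, and the main equivalence returns $\jj$-plurisubharmonicity.
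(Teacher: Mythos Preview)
Your proposal is correct and follows exactly the route the paper indicates: the paper does not supply a formal proof but simply records, just before the proposition, that $\jj$-plurisubharmonicity is induced by plurisubharmonicity on $\cp^1\times D_\Omega$, ``which amounts to constance in the first variable and subharmonicity in the second variable.'' Your argument---constancy on $\cp^1$-fibers via the maximum principle on a compact Riemann surface, subharmonicity of $\upsilon$ by restriction to the $D_\Omega$-fibers, and the converse via pullback along the holomorphic projection---is precisely the elaboration of that sentence.
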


Analogous observations can be made for $\jj$-harmonicity. In order to get a richer class of functions, we need to suitably weaken the notion of subharmonicity considered. We are thus encouraged to give the following definition.

\begin{definition}
Let $\Omega$ be a domain in $\hh$ and let $u \in \us(\Omega)$. We call $u$ \emph{weakly subharmonic} if for all $I \in \s$ the restriction $u_I=u_{|_{\Omega_I}}$  is subharmonic (after the natural identification between $L_I$ and $\cc$). We say that $u$ is \emph{weakly harmonic} if, for all $I \in \s$, $u_I$  is harmonic.
\end{definition}

\begin{remark}
A function $u \in C^2(\Omega,\rr)$ is weakly subharmonic if, and only if, for all $I \in \s$, the associated $\partial_1,\bar\partial_1$ are such that $\bar \partial_1 \partial_1u_I\geq 0$ in $\Omega_I$; that is, inequality \eqref{weaksubharmonicity} holds for all $I \in \s$. Furthermore, $u$ is weakly harmonic if, and only if, equality holds at all points.
\end{remark}

By construction:

\begin{proposition}
Let $\Omega$ be a symmetric domain in $\hh$ and let $u \in \us(\Omega)$. If $u$ is $\jj$-pluri(sub)harmonic in $\Omega \setminus \rr$ then it is weakly (sub)harmonic in $\Omega \setminus \rr$. If, moreover, $u$ is continuous at all points of $\Omega\cap\rr$ then $u$ is weakly (sub)harmonic in $\Omega$.
\end{proposition}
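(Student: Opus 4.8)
The plan is to reduce everything to the characterization of $\jj$-pluri(sub)harmonicity in the preceding proposition and then to a one-variable reflection argument. First I would invoke that proposition to write $u(x+Iy)=\upsilon(x+iy)$ for a (sub)harmonic $\upsilon:D_{\Omega}\to\rr$ and every $I\in\s$, $x+iy\in D_{\Omega}$. Fixing $I\in\s$ and using the natural identification $L_I\cong\cc$, I would read off the restriction $u_I$ on $\Omega_I\setminus\rr$: on the upper half $\{y>0\}$ it coincides with $\upsilon$, while on the lower half $\{y<0\}$ the canonical representation $x+Iy=x+(-I)(-y)$ together with the axial symmetry of $\Omega$ gives $u_I(x+Iy)=\upsilon(x-iy)=\upsilon(\overline{x+iy})$. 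Since subharmonicity (resp. harmonicity) is preserved by the anti-conformal reflection $z\mapsto\bar z$, the function $u_I$ is subharmonic (resp. harmonic) on each of the two components of $\Omega_I\setminus\rr$, hence on $\Omega_I\setminus\rr$; as $I$ is arbitrary this proves weak (sub)harmonicity on $\Omega\setminus\rr$.

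For the second assertion I must propagate (sub)harmonicity of $u_I$ across the real segment $\Omega_I\cap\rr$. The two computations above show that $u_I$ is even in $y$, i.e. $u_I(x+Iy)=u_I(x-Iy)$, so the two halves match continuously along $\rr$ precisely because $u$ is assumed continuous at real points. I would then fix $x_0\in\Omega_I\cap\rr$ and a disk $\{|q-x_0|<r\}\subset\Omega_I$ and test the sub-mean value inequality directly: parametrising the boundary circle by $x_0+re^{I\theta}$ and using the evenness in $y$, the circular average of $u_I$ reduces to the average of $\upsilon$ over the corresponding upper semicircle, while continuity identifies $u_I(x_0)$ with the boundary value of $\upsilon$ there. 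Invoking the Schwarz-type reflection principle for subharmonic functions (equivalently, checking this sub-mean inequality and appealing to the upper semicontinuity characterization of subharmonicity) upgrades $u_I$ to a subharmonic function on all of $\Omega_I$; the harmonic case then follows by applying the statement to both $u$ and $-u$.

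The routine part is $\Omega\setminus\rr$, where the reflection $z\mapsto\bar z$ does all the work. The main obstacle is the passage across $\rr$: subharmonicity off the line together with mere continuity does not by itself give subharmonicity across it (a concave corner along the axis is continuous and harmonic off the axis, yet fails to be subharmonic), so the argument cannot be purely formal and must invoke a reflection principle whose hypotheses are genuinely met here. Concretely, I expect the delicate step to be the verification of the sub-mean value inequality at the real points $x_0\in\Omega_I\cap\rr$ — equivalently, the control of how the two halves of $u_I$ are glued along the axis — which is exactly where the continuity assumption and the even symmetry inherited from the axial symmetry of $\Omega$ and the constancy of $u$ on the spheres $x+y\s$ must be brought to bear in tandem.
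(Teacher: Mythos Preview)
Your argument for the first assertion (on $\Omega\setminus\rr$) is correct and is exactly what the paper's phrase ``By construction'' encodes: the preceding proposition identifies $u$ with a (sub)harmonic $\upsilon$ on the half-plane domain $D_\Omega$, and conjugation carries (sub)harmonicity to the lower half of each $\Omega_I$.

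The second assertion, however, has a genuine gap --- and the ``concave corner'' you yourself flag is not merely a warning but already a counterexample to the statement. Take $\Omega=\hh$ and $u(q)=-|Im(q)|$. Then $\upsilon(x+iy)=-y$ is harmonic on $D_\Omega=\{y>0\}$, so $u$ is $\jj$-pluriharmonic on $\hh\setminus\rr$, and $u$ is continuous everywhere, in particular at all real points. But for every $I\in\s$ the restriction $u_I(x+Iy)=-|y|$ is \emph{super}harmonic on $L_I$: at a real center $x_0$ the circular average is $\frac{1}{\pi}\int_0^\pi(-r\sin\theta)\,d\theta=-2r/\pi<0=u_I(x_0)$, so the sub-mean-value inequality you propose to ``check directly'' fails. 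Thus the Schwarz-type reflection principle you invoke does not apply here: the even reflection of a subharmonic function across a line is \emph{not} subharmonic in general, and the hypotheses of the proposition supply no extra condition (such as vanishing inward normal derivative of $\upsilon$ along $\rr$) that would force it. The paper gives no argument for this step either, so the second clause appears to require a hypothesis stronger than mere continuity at real points.
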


The converse implication is not true, as shown by the next example. Here, $\langle\cdot,\cdot\rangle$ denotes the Euclidean scalar product on $Im(\hh)\cong\rr^3$.

\begin{example}\label{coordinates}
All real affine functions $u: \hh \to \rr$ are weakly harmonic, including the coordinates $x_0,x_1,x_2,x_3$ with respect to any basis $1,I,J,IJ$ with $I,J \in \s, I\perp J$. On the other hand, $x_1,x_2,x_3$ are not $\jj$-plurisubharmonic in $\hh \setminus \rr$, as $x_1(x+Iy) = y \langle I,i \rangle, x_2(x+Iy) = y \langle I,j \rangle, x_3(x+Iy) = y \langle I,k \rangle$ are not constant in $I$.
\end{example}

Actually, a stronger property holds for real affine functions $u: \hh \to \rr$: they are pluriharmonic with respect to any constant orthogonal complex structure. This motivates the next definition.

\begin{definition}
Let $\Omega$ be a domain in $\hh$ and let $u \in \us(\Omega)$. We say that $u$ is \emph{strongly (sub)harmonic} if it is pluri(sub)harmonic with respect to every constant orthogonal complex structure on $\Omega$.
\end{definition}

\begin{remark}
A function $u \in C^2(\Omega,\rr)$ is strongly subharmonic if, for all $I,J \in \s$ with $I \perp J$,
\begin{equation}
H_{I,J}(u)=
\left( \begin{array}{cc}
\bar\partial_1\partial_1 u & \bar\partial_1\partial_2 u \\
\bar\partial_2\partial_1 u & \bar\partial_2\partial_2 u
\end{array}\right)
\end{equation}
is a positive semidefinite matrix at each $q \in \Omega$. The function $u$ is strongly harmonic if for all $I,J \in \s$ with $I \perp J$ the matrix $H_{I,J}(u)$ has constant rank $0$.
\end{remark}

Clearly, if the matrix $H_{I,J}(u)$ is positive semidefinite then its $(1,1)$-entry $\bar \partial_1 \partial_1u$ is non-negative. Similarly, if $H_{I,J}(u)$ has constant rank $0$ then $\bar \partial_1 \partial_1u\equiv 0$. This leads to the next result, which, however, is not only true for $u \in C^2(\Omega,\rr)$ but also for $u \in \us(\Omega)$.

\begin{proposition}
Let $u \in \us(\Omega)$. If $u$ is strongly subharmonic then it is weakly harmonic.
\end{proposition}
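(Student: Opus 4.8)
The plan is to establish the statement first for $u\in C^2(\Omega,\rr)$ and then to bootstrap to the general upper semicontinuous case. Fix $I\in\s$; the goal is to show that the restriction $u_I$ is harmonic, i.e. that $\bar\partial_1\partial_1u$ vanishes along $\Omega_I$. For the reduction to the smooth case I would mollify $u$ by a radially symmetric kernel. Such convolution is invariant under the orthogonal group acting on $\hh$, hence commutes with every constant orthogonal complex structure, so the regularizations $u_\varepsilon$ remain plurisubharmonic with respect to every constant structure — that is, strongly subharmonic — while decreasing to $u$. Granting the $C^2$ statement, each $u_\varepsilon$ is weakly harmonic, so $u_I=\lim_\varepsilon (u_\varepsilon)_I$ is a decreasing limit of harmonic functions and is therefore superharmonic; since $u_I$ is at the same time subharmonic (directly from strong subharmonicity, the slice $L_I$ being a complex line for the structure $I$), it is harmonic. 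Thus it suffices to treat $u\in C^2(\Omega,\rr)$.

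In the smooth case I would invoke the preceding remark: strong subharmonicity means $H_{I,J}(u)\succeq 0$ for all $I,J\in\s$ with $I\perp J$. Reading off the two diagonal entries gives, at every point,
\begin{equation*}
\bar\partial_1\partial_1u=\tfrac14\Big(\tfrac{\partial^2}{\partial x_0^2}+\tfrac{\partial^2}{\partial x_1^2}\Big)u\ \ge\ 0,\qquad
\bar\partial_2\partial_2u=\tfrac14\Big(\tfrac{\partial^2}{\partial x_2^2}+\tfrac{\partial^2}{\partial x_3^2}\Big)u\ \ge\ 0 .
\end{equation*}
The first inequality is the Laplacian of $u$ on the slice plane $L_I=\mathrm{span}(1,I)$, so every $u_I$ is subharmonic; this already yields weak subharmonicity. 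The second is the Laplacian of $u$ on the orthogonal plane $L_I^\perp=\mathrm{span}(J,K)$. Since the trace $\bar\partial_1\partial_1u+\bar\partial_2\partial_2u=\tfrac14\Delta u$ is the full Euclidean Laplacian and is independent of the chosen basis, I would try to combine the two diagonal inequalities, taken over the whole family of structures, to push the slice Laplacian down to zero.

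The main obstacle is precisely this upgrade from $\bar\partial_1\partial_1u\ge0$ to $\bar\partial_1\partial_1u=0$ on $L_I$, i.e. the passage from subharmonic to harmonic. The natural device would be to realize the slice plane $L_I$ as the \emph{second} coordinate plane of some other constant structure, so that its Laplacian reappears as a $(2,2)$-entry and the complementary $(1,1)$-entry supplies the reverse inequality. This is exactly where the difficulty lies: every second coordinate plane $\mathrm{span}(J,K)$ sits inside $\mathrm{Im}(\hh)$, whereas $L_I$ always contains the real axis, so $L_I$ can never occur as the orthogonal complement of a slice plane and the direct swap is unavailable. I would therefore attack the crux by using the positivity of $H_{I,J}(u)$ simultaneously for all $I\in\s$ rather than one at a time — for instance by averaging the scalar inequalities $\hat I^{\,T}H'\hat I$ over the sphere of imaginary units (with $H'$ the Hessian of $u$ restricted to $\mathrm{Im}(\hh)$) and feeding the outcome back into the trace identity. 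Showing that simultaneous semidefiniteness over the entire sphere of constant structures is rigid enough to annihilate the slice Laplacian is the heart of the matter, and I expect this conversion of a one-sided family of inequalities into a two-sided equality on slices to be the step demanding the most care; the $\us$ version then follows from the $C^2$ one by the approximation above.
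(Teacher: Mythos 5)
Your proposal is not a proof: you yourself flag the decisive step --- upgrading $\bar\partial_1\partial_1 u \geq 0$ on each slice to $\bar\partial_1\partial_1 u = 0$ --- as the unresolved ``heart of the matter'', and everything else in the write-up is scaffolding around that missing step. The reason you could not close it is that it cannot be closed: the implication ``strongly subharmonic $\Rightarrow$ weakly harmonic'' is false as literally stated. Take $u(q)=|q|^2$. For any $I,J\in\s$ with $I\perp J$ one has $u=z_1\bar z_1+z_2\bar z_2$, so $H_{I,J}(u)$ is the identity matrix at every point and $u$ is strongly subharmonic (it is even one of the paper's own examples, namely $|q|^\alpha$ with $\alpha=2$; the examples $Re^2(q)$ and $|Im(q)|^2$ work too); yet $u_I(x+Iy)=x^2+y^2$ has Laplacian $4\neq 0$, so no restriction $u_I$ is harmonic. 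The same functions show that no averaging of the semidefiniteness of $H_{I,J}(u)$ over $I\in\s$ can ever force the slice Laplacian to vanish: the family of inequalities is simply not rigid in the way you hoped.

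What happened is that the statement contains a misprint, which the blind setting prevented you from detecting. Comparing with the remark immediately preceding it (positive semidefinite $H_{I,J}(u)$ gives $\bar\partial_1\partial_1 u\ge 0$, i.e.\ subharmonic slices; rank $0$ gives $\bar\partial_1\partial_1 u\equiv 0$, i.e.\ harmonic slices), with the implication diagram later in the section, and with the proof the paper actually supplies, the intended conclusion is ``weakly \emph{sub}harmonic''. For that statement the paper's argument is two lines and needs no smoothing and no reduction to the $C^2$ case: $u$ is plurisubharmonic with respect to the constant structure $I$, the inclusion $\Omega_I\hookrightarrow\Omega$ is holomorphic from $(\Omega_I,I)$ to $(\Omega,I)$, and plurisubharmonicity is preserved by composition with holomorphic maps, so $u_I$ is subharmonic. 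The part of your argument that does work (the $(1,1)$-entry inequality plus mollification) proves exactly this corrected statement, but only via a detour through the smooth case that the composition argument renders unnecessary; had you trusted that computation and questioned the word ``harmonic'' instead, you would have landed on the paper's result.
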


\begin{proof}
Let $u \in \us(\Omega)$ be strongly subharmonic, let $I\in\s$ and let us prove that $u_I$ is subharmonic. By construction, $u$ is plurisubharmonic with respect to the constant orthogonal complex structure $I$. Moreover, the inclusion map $incl:\Omega_I\to\Omega$ is a holomorphic map from $(\Omega_I,I)$ to $(\Omega,I)$. As a consequence, $u_I=u\circ incl$ is subharmonic, as desired.
\end{proof}

Example \ref{coordinates} shows that strong (sub)harmonicity does not imply $\jj$-pluri(sub)harmonicity. The converse implication does not hold, either:
\begin{example}\label{x^2-y^2}
The function $u(q) = Re(q^2)$ is $\jj$-pluriharmonic in $\hh\setminus\rr$, as $u(x+Iy) = x^2-y^2$ does not depend on $I$ and $\left(\frac{\partial^2}{\partial x^2} + \frac{\partial^2}{\partial y^2} \right) u(x+Iy) \equiv 0$. On the other hand, $u$ is not strongly subharmonic. Actually, it is not plurisubharmonic with respect to any constant orthogonal complex structure $I$: after choosing $J \in \s$ with $J \perp I$, we compute
\begin{equation*}
u(z_1+z_2J) = Re\left(z_1^2-z_2\bar z_2 +(z_1z_2+z_2\bar z_1)J\right) = \frac{z_1^2+\bar z_1^2}{2}-z_2\bar z_2
\end{equation*}
for all $z_1,z_2 \in L_I$, so that $H_{I,J}(u)\equiv
\left( \begin{array}{cc}
0 & 0\\
0 & -1
\end{array}\right)$. 
\end{example}

We have proven the following implications (none of which can be reversed):
\begin{equation*}
\begin{array}{rlcrl}
&&\mathrm{plurisubharmonic\ }&&\\
&&\mathrm{w.r.t.\ all\ OCS's\ in\ }\Omega \setminus \rr&&\\
&\swarrow&&\searrow&\\
\jj\mathrm{-plurisubharmonic}&&&&\mathrm{strongly\ subh.}\\
\mathrm{in\ } \Omega \setminus \rr&&&&\mathrm{in\ } \Omega\setminus \rr\\
&\searrow&&\swarrow&\\
&&\mathrm{weakly\ subharmonic\ in\ } \Omega\setminus \rr&&
\end{array}
\end{equation*}

A similar scheme can be drawn for the quaternionic notions of harmonicity. We show with a further example that a strongly subharmonic function is not necessarily strongly \emph{harmonic} when it is weakly harmonic, or even $\jj$-pluriharmonic.

\begin{example}\label{log}
Consider the function $u : \hh \to \rr$ with $u(q):=\log|q|$ for all $q\in\hh\setminus\{0\}$ and $u(0):=-\infty$. $u$ is $\jj$-pluriharmonic in $\hh\setminus\rr$, as $u(x+Iy) = \frac12 \log(x^2+y^2)$. As a consequence, $u$ is also weakly harmonic. 
On the other hand, for any choice of $I,J \in \s$, the fact that $u(z_1+z_2J) = \frac{1}{2}\log(z_1\bar z_1 + z_2 \bar z_2)$ implies that 
\begin{align*}
H_{I,J}(u)_{|_{z_1+z_2J}} &= \frac{1}{2 (z_1\bar z_1 + z_2 \bar z_2)^2} 
\left( \begin{array}{cc}
z_2 \bar z_2 & -z_1 \bar z_2\\
-z_2\bar z_1 & z_1\bar z_1
\end{array}\right)\\
&= \frac{1}{2 (|z_1|^2 + |z_2|^2)^2} 
\left( \begin{array}{cc}
|z_2|^2 & -z_1 \bar z_2\\
-z_2\bar z_1 & |z_1|^2
\end{array}\right)\,.
\end{align*}
Hence, $u$ is strongly subharmonic but it is not strongly harmonic.
\end{example}

Let us review a few classical constructions in our new environment.

\begin{remark}
On a given domain $\Omega$, let us denote by $\wsh(\Omega)$ the set of weakly subharmonic functions, by $\ssh(\Omega)$ that of strongly subharmonic functions, and by $\jpsh(\Omega)$ that of $\jj$-plurisubharmonic functions on $\Omega$ (if $\Omega$ equals a symmetric domain minus $\rr$). If $S$ is any of these sets then:
\begin{enumerate}
\item $S$ is a convex cone;
\item for all $u \in S$, if $\varphi$ is a real-valued $C^2$ function on a neighborhood of $u(\rr)$ and if $\varphi$ is increasing and convex then $\varphi \circ u :\Omega \to \rr$ also belongs to $S$;
\item for all $u_1,u_2 \in S$, the function $u(q) = \max\{u_1(q),u_2(q)\}$ belongs to $S$;
\item if $\{u_\alpha\}_{\alpha \in A}$ (with $A \neq \emptyset$) is a family in $S$, locally bounded from above, and if $u(q) = \sup_{\alpha \in A} u_\alpha(q)$ for all $q \in \Omega$ then the upper semicontinuous regularization $u^*$ belongs to $S$.
\end{enumerate}
\end{remark}

\begin{example}
For any $\alpha>0$, the function $u : \hh \to \rr \ q \mapsto |q|^\alpha$ is strongly subharmonic in $\hh$ and it is $\jj$-plurisubharmonic in $\hh \setminus \rr$.
\end{example}

\begin{example}
The functions $Re^2(q) = x_0^2(q)$ and $|Im(q)|^2 = x_1^2(q)+x_2^2(q)+x_3^2(q)$ are strongly subharmonic in $\hh$. (They are also $\jj$-plurisubharmonic in $\hh\setminus\rr$, as $Re^2(x+Iy) = x^2$ and $|Im(x+Iy)|^2=y^2$).
\end{example}

We conclude this section showing that any given subharmonic function on an axially symmetric planar domain extends to a weakly subharmonic function on the corresponding symmetric domain of $\hh$.

\begin{remark}
If we start with a domain $D \subseteq \cc$ that is symmetric with respect to the real axis and a (sub)harmonic function $\upsilon$ on $D$, we may define a weakly (sub)harmonic function $u$ on the symmetric domain $\Omega = \bigcup_{x+iy \in D} x+y\s$ by setting
\begin{equation*}
u(x+Iy) := \frac{1+\langle I,i \rangle}2 \upsilon(x+iy) + \frac{1-\langle I,i \rangle}2 \upsilon(x-iy)
\end{equation*} 
for all $x \in \rr,I\in \s, y>0$ such that $x+Iy \in \Omega$ and $u(x):=\upsilon(x)$ for all $x\in\Omega\cap\rr$.
\end{remark}


\section{Composition with regular functions}\label{sectioncomposition}

We now want to understand the behavior of the different notions of subharmonicity we introduced, under composition with regular functions. For $\jj$-pluri(sub)harmonicity, Remark \ref{J-holomorphic} immediately implies:

\begin{proposition}
Let $\Omega$ be a symmetric domain in $\hh$ and let $u \in \us(\Omega)$. $u$ is $\jj$-pluri(sub)harmonic in $\Omega\setminus\rr$ if, and only if, for every slice preserving regular function $f : \Omega'\setminus\rr \to \Omega\setminus\rr$, the composition $u \circ f$ is $\jj$-pluri(sub)harmonic in $\Omega'\setminus\rr$.
\end{proposition}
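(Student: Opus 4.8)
The plan is to prove both directions of the equivalence by exploiting the characterization of $\jj$-plurisubharmonicity established in the earlier proposition, namely that a function $u \in \us(\Omega)$ is $\jj$-plurisubharmonic in $\Omega \setminus \rr$ precisely when $u(x+Iy)$ is independent of $I$ and the induced function $\upsilon$ on $D_\Omega$ is subharmonic. The key structural fact I would invoke is Remark~\ref{J-holomorphic}: a slice preserving regular $f : \Omega' \to \Omega$ restricts to a holomorphic map from $(\Omega' \setminus \rr, \jj)$ to $(\Omega \setminus \rr, \jj)$. Since $\jj$-plurisubharmonicity is, by the proposition, literally plurisubharmonicity with respect to the complex structure $\jj$ (after identification with $\cp^1 \times D_\Omega$), the whole statement reduces to the classical fact that plurisubharmonicity is preserved under pullback by holomorphic maps.

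For the forward direction (necessity of the composition condition), I would argue as follows. Assume $u$ is $\jj$-pluri(sub)harmonic in $\Omega \setminus \rr$ and let $f : \Omega' \setminus \rr \to \Omega \setminus \rr$ be any slice preserving regular function. By Remark~\ref{J-holomorphic}, $f$ is holomorphic from $(\Omega' \setminus \rr, \jj)$ to $(\Omega \setminus \rr, \jj)$. The composition of a plurisubharmonic function with a holomorphic map is plurisubharmonic, and the composition of a pluriharmonic function with a holomorphic map is pluriharmonic; hence $u \circ f$ is $\jj$-pluri(sub)harmonic in $\Omega' \setminus \rr$. This is the straightforward half, relying only on the standard functoriality of (pluri)subharmonicity under holomorphic pullback together with the identification of $\jj$-plurisubharmonicity with genuine plurisubharmonicity.

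For the converse direction (sufficiency), I would apply the hypothesis to a cleverly chosen test function. The natural candidate is the identity-type slice preserving regular map, or more precisely the inclusion associated to $\Omega$ itself: taking $\Omega' = \Omega$ and $f = \mathrm{id}$, which is trivially slice preserving and regular, immediately yields that $u = u \circ \mathrm{id}$ is $\jj$-pluri(sub)harmonic. Thus the converse is formally immediate once one observes that the identity is an admissible choice of $f$. If one wishes to avoid this triviality and genuinely use a family of maps, one can instead note that slice preserving regular functions with values in $\Omega \setminus \rr$ exist in abundance (for instance local parametrizations built from power series with real coefficients), but the identity already suffices.

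The main obstacle, such as it is, is not analytic but definitional: one must verify that the notion of $\jj$-plurisubharmonicity genuinely coincides with plurisubharmonicity for the complex structure $\jj$ under the identification $\Omega \setminus \rr \cong \cp^1 \times (\rr + i\rr^+)$, so that the classical pullback theorem applies verbatim. The earlier proposition supplies exactly this identification, characterizing $\jj$-plurisubharmonicity as constancy in the $\cp^1$-factor together with subharmonicity in the $D_\Omega$-factor. Once this dictionary is in place, the preservation under holomorphic pullback is classical, and the only point requiring care is that the image of $f$ lands in $\Omega \setminus \rr$ where $\jj$ is defined --- which is guaranteed by the hypothesis $f(\Omega' \setminus \rr) \subseteq \Omega \setminus \rr$ and by the fact, recorded in Remark~\ref{J-holomorphic}, that slice preserving maps send $\Omega' \setminus \rr$ into $\Omega \setminus \rr$ and intertwine $\jj$ with $\jj$.
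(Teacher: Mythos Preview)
Your proposal is correct and matches the paper's approach exactly: the paper does not even spell out a proof, stating only that ``Remark~\ref{J-holomorphic} immediately implies'' the proposition, and your argument is precisely the unpacking of that sentence---$\jj$-holomorphy of slice preserving regular maps gives the forward direction via the standard pullback property, while $f=\mathrm{id}$ handles the converse.
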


It is essential to restrict to slice preserving regular functions. If we compose $u$ with an injective regular function $f$ then the only sufficient condition we know in order for $u\circ f$ to be $\jj$-pluri(sub)harmonic is, that $u$ be $\jj^f$-pluri(sub)harmonic (see Theorem \ref{ocs}).

For weak (sub)harmonicity, we can prove the next result.

\begin{theorem}\label{compositionweakregular}
Let $u \in \us(\Omega)$. $u$ is weakly (sub)harmonic in $\Omega$ if, and only if, for every slice preserving regular function $f : \Omega' \to \Omega$, the composition $u \circ f$ is weakly (sub)harmonic in $\Omega'$.
\end{theorem}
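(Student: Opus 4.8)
The plan is to prove the two implications separately: the ``if'' direction is immediate by a clever choice of test function, while the ``only if'' direction reduces, slice by slice, to the classical fact that subharmonicity (resp.\ harmonicity) is preserved under composition with holomorphic maps of one complex variable.

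For the ``if'' direction, I would take $\Omega'=\Omega$ and $f=\mathrm{id}_\Omega$, and observe that the identity is itself a slice preserving regular function. Indeed, on each $\Omega_I$ the restriction $f_I(x+yI)=x+yI$ satisfies $\frac12(\partial_x+I\partial_y)f_I=\frac12(1+I^2)=0$, so $f$ is regular by Definition \ref{regularfunction}, and $\mathrm{id}(\Omega_I)=\Omega_I\subseteq L_I$ shows that it is slice preserving. The hypothesis then forces $u=u\circ\mathrm{id}_\Omega$ to be weakly (sub)harmonic, which is exactly the desired conclusion.

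For the ``only if'' direction, fix $I\in\s$ and study the restriction $(u\circ f)_I$ on $\Omega'_I$. The crux is the factorization $(u\circ f)_I=u_I\circ f_I$: for $q\in\Omega'_I$ the slice preserving hypothesis gives $f(q)\in L_I$, while $f(\Omega')\subseteq\Omega$ gives $f(q)\in\Omega$, hence $f(q)\in\Omega\cap L_I=\Omega_I$ and $u(f(q))=u_I(f(q))$. Here $f_I=f|_{\Omega'_I}$, viewed as a map $\Omega'_I\to\Omega_I$, is a genuine holomorphic function of one complex variable from $(\Omega'_I,I)$ to $(L_I,I)\cong(\cc,i)$: holomorphy follows from the regularity of $f$, and the fact that $f_I$ is $L_I$-valued (hence $\cc$-valued after the identification) follows precisely from the slice preserving property. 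Since $u_I$ is subharmonic (resp.\ harmonic) on $\Omega_I$ by assumption, the classical composition principle gives that $u_I\circ f_I$ is subharmonic (resp.\ harmonic) on $\Omega'_I$. As $I$ is arbitrary, $u\circ f$ is weakly (sub)harmonic.

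The whole argument is concentrated in the factorization step, and the one place that genuinely requires the full strength of the hypotheses is the role of slice preservation: it is what lets us regard $f_I$ as a map into the single complex line $L_I$ rather than into $\hh\cong\cc^2$, so that the one-variable composition result applies on each slice — this is exactly where the proposition fails for general (non slice preserving) regular $f$. The remaining verifications are routine: $u\circ f$ is upper semicontinuous since $u$ is and $f$ is continuous, and one checks $u\circ f\not\equiv-\infty$ so that $u\circ f\in\us(\Omega')$; for the harmonic case one may, if preferred, replace the general composition theorem by the change-of-variables identity $\Delta(u_I\circ f_I)=|f_I'|^2\,(\Delta u_I)\circ f_I$, valid since harmonic functions are automatically smooth.
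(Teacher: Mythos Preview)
Your proof is correct and follows essentially the same approach as the paper: both directions are argued exactly as in the paper's own proof, namely by testing against the identity for the ``if'' direction and by factoring $(u\circ f)_I=u_I\circ f_I$ slice by slice for the ``only if'' direction. Your write-up simply supplies a bit more detail (the explicit verification that $\mathrm{id}$ is slice preserving regular, and the remark on upper semicontinuity of $u\circ f$) than the paper's terse argument.
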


\begin{proof}
If $u \circ f$ is weakly (sub)harmonic for all slice preserving regular $f$ then in particular $f=f\circ id$ is weakly (sub)harmonic.

Conversely, let $u \in \us(\Omega)$ be weakly (sub)harmonic, let $f : \Omega' \to \Omega$ be a slice preserving regular function and let us prove that $u \circ f$ is weakly (sub)harmonic. For each $I\in\s$, $u_I$ is (sub)harmonic in $\Omega_I$ and the restriction $f_I$ is a holomorphic map from $(\Omega'_I,I)$ to $(\Omega_I,I)$. As a consequence, $(u \circ f)_I=u_I\circ f_I$ is (sub)harmonic in $\Omega'_I$.
\end{proof}

As for strong harmonicity and subharmonicity, they are preserved under composition with quaternionic affine transformations, as the latter are holomorphic with respect to any constant structure $I \in \s$:

\begin{remark}\label{affine}
If $u$ is a strongly (sub)harmonic function on a domain $\Omega \subseteq \hh$ then, for any $a,b \in \hh$ with $b\neq0$, the function $v(q)=u(a+qb)$ is strongly (sub)harmonic in $\Omega b^{-1}-a$.
\end{remark}

However, strong harmonicity and subharmonicity are not preserved by composition with other regular functions, not even slice preserving regular functions (see Example \ref{x^2-y^2}). For this reason, we address the study of their composition with regular functions by direct computation, starting with the $C^2$ case.

\begin{lemma}\label{laplacian}
Let $I,J \in\s$  with $I \perp J$ and let us consider the associated $\partial_1,\partial_2, \bar \partial_1, \bar \partial_2$. Let $\Omega$ be a domain in $\hh$ and let $u \in C^2(\Omega,\rr)$. For every symmetric slice domain $\Omega'$ and for every regular function $f : \Omega' \to \Omega$, we have
\begin{equation}
\bar \partial_1 \partial_1(u\circ f)_{|_{q_0}} =
(\overline{\partial_1 f_1}, \overline{\partial_1f_2})|_{q_0}\cdot
H_{I,J}(u)_{|_{f(q_0)}}
\cdot
\left.\left( \begin{array}{c}
\partial_1 f_1\\
\partial_1 f_2
\end{array}\right)\right|_{q_0}
\end{equation}
at each $q_0 \in \Omega'_I$.
If, moreover, $f$ is slice preserving then
\begin{equation}
\bar \partial_1 \partial_1(u\circ f)_{|_{q_0}} = |\partial_1 f_1|^2_{|_{q_0}} \cdot \bar\partial_1\partial_1 u_{|_{f(q_0)}}
\end{equation}
at each $q_0 \in \Omega'_I$. The same is true if there exists a constant $c \in \hh$ such that $f_I+c$ maps $\Omega'_I$ to $L_I$.
\end{lemma}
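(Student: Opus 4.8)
The plan is to compute $\bar\partial_1\partial_1(u\circ f)$ at $q_0\in\Omega'_I$ by the (Wirtinger) chain rule, expressing $u$ in the target complex coordinates $z_1,z_2$ associated to $I,J$, and then to invoke equation~\eqref{complexholomorphy} to discard all but one family of terms. Writing $u=u(z_1,\bar z_1,z_2,\bar z_2)$ and substituting $z_1=f_1,\ z_2=f_2$, the first-order chain rule reads
\[\partial_1(u\circ f) = \big((\partial_1 u)\circ f\big)\,\partial_1 f_1 + \big((\bar\partial_1 u)\circ f\big)\,\partial_1\bar f_1 + \big((\partial_2 u)\circ f\big)\,\partial_1 f_2 + \big((\bar\partial_2 u)\circ f\big)\,\partial_1\bar f_2,\]
where every factor lies in the commutative subalgebra $L_I$, so that no ordering issue arises.

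First I would record the consequences of regularity. Since $\Omega'$ is a symmetric slice domain and $f$ is regular, equation~\eqref{complexholomorphy} gives in particular $\bar\partial_1 f_1=\bar\partial_1 f_2=0$ at every point of the slice $\Omega'_I$; taking conjugates (using $\partial_1\bar g=\overline{\bar\partial_1 g}$ and $\bar\partial_1\bar g=\overline{\partial_1 g}$ for $L_I$-valued $g$) gives $\partial_1\bar f_1=\partial_1\bar f_2=0$ there as well. Crucially, these identities hold on the whole \emph{open} set $\Omega'_I\subseteq L_I$, and since $\partial_1,\bar\partial_1$ differentiate only along the $L_I$-directions $x_0,x_1$, they are tangential to $\Omega'_I$; hence I may differentiate the identities once more to conclude $\bar\partial_1\partial_1 f_a=0$ and $\bar\partial_1\partial_1\bar f_a=0$ on $\Omega'_I$ for $a=1,2$.

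The main computation is then to apply $\bar\partial_1$ to the displayed first-order expression by the product rule and evaluate at $q_0$. Three kinds of terms vanish there: those carrying an undifferentiated factor $\partial_1\bar f_a$ (which is $0$ at $q_0$); those carrying a second derivative $\bar\partial_1\partial_1 f_a$ or $\bar\partial_1\partial_1\bar f_a$ of $f$ (also $0$); and, inside the surviving chain-rule expansion of $\bar\partial_1\big((\partial_a u)\circ f\big)$, the holomorphic second-derivative terms $(\partial_b\partial_a u)\circ f$ carrying the factor $\bar\partial_1 f_b=0$. What remains is exactly $\sum_{a,b}\,\overline{\partial_1 f_b}\,\big((\bar\partial_b\partial_a u)\circ f\big)\,\partial_1 f_a$, since $\bar\partial_1\bar f_b=\overline{\partial_1 f_b}$. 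Recognizing $\bar\partial_b\partial_a u$ as the $(b,a)$-entry of $H_{I,J}(u)$ yields the claimed quadratic form
\[\bar\partial_1\partial_1(u\circ f)|_{q_0}=(\overline{\partial_1 f_1},\,\overline{\partial_1 f_2})|_{q_0}\;H_{I,J}(u)|_{f(q_0)}\;(\partial_1 f_1,\,\partial_1 f_2)^{t}|_{q_0}.\]

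Finally, for the slice preserving case I would note that $f(\Omega'_I)\subseteq L_I$ forces $f_2\equiv 0$ on $\Omega'_I$, whence $\partial_1 f_2|_{q_0}=0$; more generally, if $f_I+c$ maps $\Omega'_I$ into $L_I$ then, writing $c=c_1+c_2 J$ with $c_1,c_2\in L_I$, the $J$-component shows $f_2\equiv -c_2$ is constant on $\Omega'_I$, so again $\partial_1 f_2|_{q_0}=0$. In either case only the $(1,1)$-term of the quadratic form survives, giving $|\partial_1 f_1|^2|_{q_0}\,\bar\partial_1\partial_1 u|_{f(q_0)}$. I expect the only delicate point to be the bookkeeping in the second-order chain rule: one must carefully distinguish factors that vanish \emph{pointwise} at $q_0$ (such as $\partial_1\bar f_a$) from identities that vanish on \emph{all} of $\Omega'_I$ (such as $\bar\partial_1 f_a\equiv 0$), the latter being precisely what licenses differentiating once more to kill the second-derivative-of-$f$ contributions.
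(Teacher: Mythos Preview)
Your proof is correct and follows essentially the same route as the paper's: the same first-order chain rule, the same application of \eqref{complexholomorphy} to kill the $\bar\partial_1 f_a$ and $\partial_1\bar f_a$ factors at points of $\Omega'_I$, and the same argument for the slice preserving case via $f_2\equiv -c_2$. If anything, you are more explicit than the paper about the one subtle point---that the identities $\bar\partial_1 f_a=0$ hold on the full open slice $\Omega'_I$ and may therefore be differentiated tangentially to kill the $\bar\partial_1\partial_1 f_a$ and $\bar\partial_1\partial_1\bar f_a$ terms---whereas the paper simply attributes the vanishing of all six unwanted terms to \eqref{complexholomorphy}.
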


\begin{proof}
We compute:
\[\partial_1(u\circ f) = (\partial_1 u) \circ f \cdot \partial_1f_1 + (\partial_2 u) \circ f \cdot \partial_1f_2 + (\bar \partial_1 u) \circ f \cdot \partial_1\bar f_1 + (\bar \partial_2 u) \circ f \cdot \partial_1\bar f_2\]
and
\begin{align*}
\bar \partial_1 \partial_1(u\circ f) =\ & \bar \partial_1((\partial_1 u) \circ f) \cdot \partial_1f_1 + (\partial_1 u) \circ f \cdot \bar \partial_1\partial_1f_1+\\
&+  \bar \partial_1((\partial_2 u) \circ f) \cdot \partial_1f_2 + (\partial_2 u) \circ f \cdot  \bar \partial_1\partial_1f_2 +\\
&+ \bar \partial_1((\bar \partial_1 u) \circ f) \cdot \partial_1\bar f_1 +(\bar \partial_1 u) \circ f \cdot \bar \partial_1\partial_1\bar f_1 +\\
&+ \bar \partial_1((\bar \partial_2 u) \circ f) \cdot \partial_1\bar f_2 + (\bar \partial_2 u) \circ f \cdot \bar \partial_1\partial_1\bar f_2\,.
\end{align*}
If we evaluate the previous expression at a point $q \in L_I$, equality \eqref{complexholomorphy} guarantees the vanishing of all terms but the first and the third. Hence,
\[\bar \partial_1 \partial_1(u\circ f)_{|_q} = \bar \partial_1((\partial_1 u) \circ f)_{|_q} \cdot {\partial_1f_1}_{|_q}+  \bar \partial_1((\partial_2 u) \circ f)_{|_q} \cdot {\partial_1f_2}_{|_q}\]
where
\begin{align*}
&\bar \partial_1((\partial_1 u) \circ f)_{|_q} =\\
&=\partial_1\partial_1 u _{|_{f(q)}} \cdot {\bar \partial_1 f_1}_{|_q}+ \partial_2\partial_1 u _{|_{f(q)}} \cdot {\bar \partial_1 f_2}_{|_q}+\bar\partial_1\partial_1 u _{|_{f(q)}} \cdot {\bar \partial_1 \bar f_1}_{|_q} + \bar\partial_2\partial_1 u _{|_{f(q)}} \cdot {\bar \partial_1 \bar f_2}_{|_q} =\\
&= \bar\partial_1\partial_1 u _{|_{f(q)}} \cdot \overline{\partial_1 f_1}_{|_q} + \bar\partial_2\partial_1 u _{|_{f(q)}} \cdot \overline{\partial_1 f_2}_{|_q}
\end{align*}
and
\begin{align*}
&\bar \partial_1((\partial_2 u) \circ f)_{|_q} =\\
&=\partial_1\partial_2 u _{|_{f(q)}} \cdot {\bar \partial_1 f_1}_{|_q}+ \partial_2\partial_2 u _{|_{f(q)}} \cdot {\bar \partial_1 f_2}_{|_q}+\bar\partial_1\partial_2 u _{|_{f(q)}} \cdot {\bar \partial_1 \bar f_1}_{|_q} + \bar\partial_2\partial_2 u _{|_{f(q)}} \cdot {\bar \partial_1 \bar f_2}_{|_q} =\\
&=\bar\partial_1\partial_2 u _{|_{f(q)}} \cdot \overline{\partial_1 f_1}_{|_q} + \bar\partial_2\partial_2 u _{|_{f(q)}} \cdot \overline{\partial_1 f_2}_{|_q}\,.
\end{align*}
Thus,
\begin{align*}
\bar \partial_1 \partial_1(u\circ f)_{|_q} =\ &\bar\partial_1\partial_1 u _{|_{f(q)}} \cdot |\partial_1 f_1|^2_{|_q} + \bar\partial_2\partial_1 u _{|_{f(q)}} \cdot \overline{\partial_1 f_2}_{|_q}\cdot {\partial_1f_1}_{|_q} +\\
&+\bar\partial_1\partial_2 u _{|_{f(q)}} \cdot \overline{\partial_1 f_1}_{|_q}{\partial_1f_2}_{|_q} + \bar\partial_2\partial_2 u _{|_{f(q)}} \cdot |\partial_1 f_2|^2_{|_q}\,;
\end{align*}
that is,
\[\bar \partial_1 \partial_1(u\circ f)_{|_q}= (\overline{\partial_1 f_1}, \overline{\partial_1f_2})|_q\cdot
\left. \left( \begin{array}{cc}
\bar\partial_1\partial_1 u & \bar\partial_1\partial_2 u \\
\bar\partial_2\partial_1 u & \bar\partial_2\partial_2 u
\end{array}\right)\right|_{f(q)} \cdot
\left.\left( \begin{array}{c}
\partial_1 f_1\\
\partial_1 f_2
\end{array}\right)\right|_{q}\,.\]
Finally, if there exists $c = c_1+c_2J \in \hh$ such that $f_I+c$ maps $\Omega_I$ to $L_I$. then $f_2\equiv -c_2$ in $\Omega_I$ so that $\partial_1 f_2$ vanishes identically in $\Omega_I$ and
\[\bar \partial_1 \partial_1(u\circ f)_{|_{q}} = |\partial_1 f_1|^2_{|_{q}} \cdot \bar\partial_1\partial_1 u_{|_{f(q)}}\,,\]
as desired.
\end{proof}

We are now ready to study the composition of strongly (sub)harmonic $C^2$ functions with regular functions.

\begin{theorem}\label{compositionstrongregularc2}
Let $u\in C^2(\Omega,\rr)$. $u$ is strongly (sub)harmonic if, and only if, for every symmetric slice domain $\Omega'$ and for every regular function $f : \Omega' \to \Omega$, the composition $u \circ f$ is weakly (sub)harmonic.
\end{theorem}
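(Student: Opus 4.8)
The plan is to prove the two implications separately, using Lemma~\ref{laplacian} as the main tool in both. Throughout I write $v=(\partial_1f_1,\partial_1f_2)$ for the relevant column vector and $v^*=(\overline{\partial_1f_1},\overline{\partial_1f_2})$ for its conjugate transpose, so that the conclusion of Lemma~\ref{laplacian} reads
\begin{equation*}
\bar\partial_1\partial_1(u\circ f)_{|_{q_0}} = v^*_{|_{q_0}}\,H_{I,J}(u)_{|_{f(q_0)}}\,v_{|_{q_0}}
\end{equation*}
at each $q_0\in\Omega'_I$. I first note that $H_{I,J}(u)$ is a Hermitian matrix: since $u$ is real-valued and the complex derivations commute, $\overline{\bar\partial_a\partial_b u}=\partial_a\bar\partial_b u=\bar\partial_b\partial_a u$, so the $(a,b)$ and $(b,a)$ entries are conjugate. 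Hence positive semidefiniteness is equivalent to $v^*H_{I,J}(u)v\geq 0$ for all $v$, and vanishing rank is equivalent to $v^*H_{I,J}(u)v=0$ for all $v$.

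For the forward direction, suppose $u$ is strongly subharmonic. Fixing a symmetric slice domain $\Omega'$, a regular $f\colon\Omega'\to\Omega$, and $I\in\s$, I would show that $(u\circ f)_I$ is subharmonic, i.e. $\bar\partial_1\partial_1(u\circ f)\geq 0$ on $\Omega'_I$. Choosing any $J\in\s$ with $I\perp J$ and applying Lemma~\ref{laplacian}, the Laplacian equals the Hermitian form $v^*H_{I,J}(u)_{|_{f(q_0)}}v$, which is non-negative because $H_{I,J}(u)$ is positive semidefinite at every point. The harmonic case is identical, with $H_{I,J}(u)\equiv 0$ forcing the form to vanish. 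This direction is essentially immediate once the lemma is in hand.

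The reverse direction carries the real content. Assuming $u\circ f$ is weakly subharmonic for every such $f$, I must recover positive semidefiniteness of the full matrix $H_{I,J}(u)$ at an arbitrary $p\in\Omega$ from the scalar quantities $\bar\partial_1\partial_1(u\circ f)$. The idea is to probe the Hermitian form $v\mapsto v^*H_{I,J}(u)_{|_p}v$ in every direction by means of affine test functions. For $b=b_1+b_2J\in\hh$ I set $f(q)=p+qb$, a degree-one regular polynomial. A short computation using $q=z_1+z_2J$ and the rule $Jw=\bar wJ$ for $w\in L_I$ gives $f_1=p_1+z_1b_1-z_2\bar b_2$ and $f_2=p_2+z_1b_2+z_2\bar b_1$, whence $\partial_1f_1\equiv b_1$ and $\partial_1f_2\equiv b_2$. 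Thus $v=(b_1,b_2)$ sweeps out all of $L_I^2$ as $b$ ranges over $\hh$. Taking $q_0=0$ (a real point lying in every $L_I$, with $f(0)=p$) and choosing $\Omega'$ to be a ball $B(0,\varepsilon)$ — a symmetric slice domain — small enough that $f(\Omega')\subseteq\Omega$, Lemma~\ref{laplacian} yields $\bar\partial_1\partial_1(u\circ f)_{|_0}=v^*H_{I,J}(u)_{|_p}v$. Weak subharmonicity forces this to be $\geq 0$ for every $v$, so $H_{I,J}(u)_{|_p}$ is positive semidefinite; as $p,I,J$ were arbitrary, $u$ is strongly subharmonic. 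For the harmonic statement, weak harmonicity of every $u\circ f$ gives $v^*H_{I,J}(u)_{|_p}v=0$ for all $v$, and a Hermitian matrix whose associated form vanishes identically is the zero matrix, so $H_{I,J}(u)\equiv 0$ and $u$ is strongly harmonic.

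The hard part is the reverse direction, and within it the one genuinely delicate point is producing, for each prescribed target vector $v$, a bona fide regular function defined on a \emph{symmetric slice domain} and valued in $\Omega$. The affine family $f(q)=p+qb$ resolves this cleanly: the verification that $b\mapsto(b_1,b_2)$ is a bijection $\hh\to L_I^2$ guarantees that no direction of the Hessian is left unprobed, while the only technical care needed is shrinking $\Omega'$ to a small ball so that its image fits inside $\Omega$, which is harmless since $\Omega$ is open and $f(0)=p$.
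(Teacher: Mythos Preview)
Your proof is correct and follows essentially the same strategy as the paper: Lemma~\ref{laplacian} gives the forward direction immediately, and the reverse direction is handled by probing $H_{I,J}(u)_{|_p}$ with the affine test functions $f(q)=p+qb$ on a small ball centered at $0$. The only cosmetic difference is that you compute $\partial_1f_1\equiv b_1,\ \partial_1f_2\equiv b_2$ by expanding $qb$ directly, whereas the paper obtains the same values at $0$ via $\partial_cf\equiv b$ and the Jacobian formula~\eqref{complexjacobian}; your explicit verification that $H_{I,J}(u)$ is Hermitian is a helpful detail the paper leaves implicit.
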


\begin{proof}
If $u : \Omega \to \rr$ is strongly subharmonic then, for all $I,J \in \s$ (with $I \perp J$), the matrix $H_{I,J}(u)$ is positive semidefinite. For every regular function $f : \Omega' \to \Omega$ and for all $I \in \s$, Lemma \ref{laplacian} implies $\bar \partial_1 \partial_1(u\circ f)_{|_{z_1}}\geq 0$ at each $z_1 \in \Omega_I$. Hence, $u\circ f$ is weakly subharmonic.

Conversely, if $u \circ f$ is weakly subharmonic for every regular function $f : \Omega' \to \Omega$ then we can prove that $u$ is strongly subharmonic in the following way. Let us fix $I,J \in \s, p \in \Omega$ (with $I \perp J$) and prove that $H_{I,J}(u)$ is positive semidefinite at $p$, i.e., that 
\[(\bar v_1, \bar v_2)\cdot
H_{I,J}(u)_{|_{p}}
\cdot
\left( \begin{array}{c}
v_1\\
v_2
\end{array}\right)
\geq 0\]
for arbitrary $v_1,v_2 \in L_I$. Let us set $v := v_1+v_2J$ and $f(q):=qv+p$ for $q\in B(0,R)$ (with $R>0$ small enough to guarantee the inclusion of $f(B(0,R))=B(p,|v|R)$ into $\Omega$). By direct computation, $\partial_cf\equiv v$. Formula \eqref{complexjacobian} yields the equalities ${\partial_1 f_1}_{|_{0}}= v_1, {\partial_1 f_2}_{|_{0}}= v_2$. Taking into account Lemma \ref{laplacian} and the fact that $f(0)=p$, we conclude that
\[(\bar v_1, \bar v_2)\cdot
H_{I,J}(u)_{|_{p}}
\cdot
\left( \begin{array}{c}
v_1\\
v_2
\end{array}\right)
= \bar \partial_1 \partial_1(u\circ f)_{|_{0}}\,.\]
Since $u\circ f$ is weakly subharmonic, $\bar \partial_1 \partial_1(u\circ f)_{|_{0}}\geq0$ and we have proven the desired inequality.

Analogous reasonings characterize strong \emph{harmonicity}.
\end{proof}

The previous result allows us to construct a large class of examples of weakly subharmonic functions.

\begin{example}
For any regular function $f: \Omega \to \hh$ on a symmetric slice domain $\Omega$, the components of $f$ with respect to any basis $1,I,J, IJ$ with $I,J \in \s,I\perp J$ are weakly harmonic. Furthermore, for all $\alpha>0$ the functions $\log|f|, |f|^\alpha,Re^2f, |Im f|^2$ are weakly subharmonic.
\end{example}


\section{Mean-value property and consequences}\label{sectionmeanvalue}

We can characterize weak and strong pluri(sub)harmonicity of $u \in \us(\Omega)$ in terms of mean-value properties. For each $I \in \s, a \in \Omega, b \in \hh$ such that $\Omega$ includes the circle $\Gamma_{I,a,b}:=\{a+e^{I\vartheta}b: \vartheta\in\rr\}$, we will use the notation
\begin{equation}
l_I(u; a, b) := \frac{1}{2\pi} \int_0^{2\pi} u(a+e^{I\vartheta}b) d\vartheta.
\end{equation}

\begin{proposition}\label{weaklymean}
Let $\Omega$ be a domain in $\hh$ and let $u \in \us(\Omega)$. $u$ is weakly subharmonic (resp., weakly harmonic) if, and only if, the inequality 
\[u(a) \leq l_I(u; a, b)\]
(resp., the equality $u(a) = l_I(u; a, b)$) holds for all $I \in \s, a \in \Omega_I, b\in L_I\setminus\{0\}$ such that $\Gamma_{I,a,b}\subset\Omega_I$.
\end{proposition}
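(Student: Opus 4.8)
The plan is to reduce Proposition~\ref{weaklymean} to the classical characterization of subharmonicity (and harmonicity) of upper-semicontinuous functions on a domain in $\cc$ via the sub-mean-value inequality over circles. Indeed, by definition $u$ is weakly subharmonic if and only if, for every $I\in\s$, the restriction $u_I$ is subharmonic on $\Omega_I$ after identifying $L_I$ with $\cc$. The circles $\Gamma_{I,a,b}=\{a+e^{I\vartheta}b:\vartheta\in\rr\}$ with $a\in\Omega_I$ and $b\in L_I\setminus\{0\}$ are precisely the Euclidean circles centered at $a$ contained in the plane $L_I$, and under the identification $L_I\cong\cc$ the quantity $l_I(u;a,b)$ is exactly the circular mean $\frac{1}{2\pi}\int_0^{2\pi}u_I(a+e^{I\vartheta}b)\,d\vartheta$ of $u_I$ around $a$. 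So the statement is nothing but the conjunction, over all $I\in\s$, of the classical sub-mean-value (resp. mean-value) property for $u_I$.

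First I would fix $I\in\s$ and identify $(L_I,I)$ with $(\cc,i)$, so that $u_I\in\us(\Omega_I)$ becomes an upper-semicontinuous function on a planar domain. Then I would invoke the standard theorem from classical potential theory: an upper-semicontinuous function $\upsilon\not\equiv-\infty$ on a domain $D\subseteq\cc$ is subharmonic if and only if it satisfies the sub-mean-value inequality $\upsilon(a)\le\frac{1}{2\pi}\int_0^{2\pi}\upsilon(a+re^{i\vartheta})\,d\vartheta$ for every $a\in D$ and every $r>0$ with the closed disc of radius $r$ about $a$ contained in $D$; and $\upsilon$ is harmonic precisely when equality always holds. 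Applying this to $\upsilon=u_I$ converts ``$u_I$ subharmonic'' into ``$u(a)\le l_I(u;a,b)$ for all admissible $a,b\in L_I$'' (the modulus $r=|b|$ encoding the radius, the argument of $b$ being irrelevant to the mean). Quantifying over all $I\in\s$ then yields both directions of the equivalence simultaneously for the subharmonic case, and the same argument with equalities handles the harmonic case.

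One small point I would be careful to address is the admissibility condition: the proposition asks only that the circle $\Gamma_{I,a,b}\subset\Omega_I$, whereas the cleanest classical statement requires the closed disc bounded by the circle to lie in $\Omega_I$. I would note that for the equivalence this causes no trouble, since one may either appeal to the formulation of the classical result in terms of circles (the sub-mean-value property over all circles whose bounding disc stays in the domain is the relevant one, and on a domain the two formulations agree on the relevant family), or observe that it suffices to verify the inequality for small circles, all of whose bounding discs automatically lie in $\Omega_I$ once the circle does and the radius is small. Since subharmonicity is a local property and the characterization only needs to be checked on a cofinal family of small circles around each point, the gap between ``circle in $\Omega_I$'' and ``disc in $\Omega_I$'' is immaterial.

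The main obstacle here is conceptual rather than technical: the whole content of the proposition is the dictionary between the quaternionic slice-wise definition and the classical planar theory, so the essential step is to recognize that weak subharmonicity is, by its very definition, subharmonicity of each slice $u_I$, and that the quantity $l_I(u;a,b)$ is exactly the classical circular mean on that slice. Once this identification is made explicit, the proof is an immediate transcription of the classical sub-mean-value characterization, applied uniformly across the family of slices indexed by $I\in\s$. I would therefore keep the proof short, citing the classical result (for instance, the standard reference on subharmonic functions) rather than reproving it, and devoting the bulk of the exposition to making the identification between the intrinsic quaternionic data and the planar data transparent.
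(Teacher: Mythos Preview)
Your proposal is correct and follows essentially the same approach as the paper: fix $I\in\s$, invoke the classical sub-mean-value characterization of subharmonicity for $u_I$ on the planar domain $\Omega_I$ (the paper cites \cite[Theorem 2.4.1]{klimek}), and then quantify over all $I$. Your additional remark about the ``circle in $\Omega_I$'' versus ``closed disc in $\Omega_I$'' admissibility condition is a reasonable point of care that the paper glosses over, but otherwise the arguments coincide.
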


\begin{proof}
Fix any $I\in\s$. By~\cite[Theorem 2.4.1]{klimek}, $u_I$ is subharmonic in $\Omega_I$ if, and only if, $u(a) \leq l_I(u; a, b)$ for all $a \in \Omega_I, b\in L_I$ such that $\Gamma_{I,a,b}\subset\Omega_I\setminus\{0\}$. The corresponding equalities characterize harmonicity.
\end{proof}

\begin{proposition}\label{stronglymean}
Let $\Omega$ be a domain in $\hh$ and let $u \in \us(\Omega)$. $u$ is strongly subharmonic (resp., strongly harmonic) if, and only if, the inequality 
\[u(a) \leq l_I(u; a, b)\]
(resp., the equality $u(a) = l_I(u; a, b)$) holds for all $I \in \s, a \in \Omega, b \in \hh\setminus\{0\}$ such that $\Gamma_{I,a,b}\subset\Omega$.
\end{proposition}

\begin{proof}
For each $I\in\s$, let us apply~\cite[Theorem 2.9.1]{klimek} to establish whether $u$ is $I$-plurisubharmonic. This happens if, and only if, $u(a) \leq l_I(u; a, b)$ for all $a \in \Omega, b\in \hh\setminus\{0\}$ such that $\Gamma_{I,a,b}\subset\Omega$. The corresponding equalities characterize $I$-pluriharmonicity.
\end{proof}

As an application of the previous results, we can extend Theorem~\ref{compositionstrongregularc2} to all $u \in \us(\Omega)$.

\begin{theorem}\label{compositionstrongregular}
Let $u \in \us(\Omega)$. $u$ is strongly (sub)harmonic if, and only if, for every regular function $f : \Omega' \to \Omega$ the composition $u \circ f$ is weakly (sub)harmonic.
\end{theorem}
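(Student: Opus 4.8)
The plan is to deduce both implications from the mean-value characterizations of Propositions~\ref{weaklymean} and~\ref{stronglymean}; this is precisely what lets us drop the $C^2$ hypothesis of Theorem~\ref{compositionstrongregularc2}. In place of the pointwise Hessian identity of Lemma~\ref{laplacian} we shall use two facts valid for an arbitrary $u\in\us(\Omega)$: that plurisubharmonicity is preserved under holomorphic pull-back, and that strong and weak subharmonicity are each detected by sub-mean-value inequalities over circles $\Gamma_{I,a,b}$.

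For the forward implication, assume $u$ is strongly subharmonic, i.e.\ plurisubharmonic with respect to every constant structure $I\in\s$. Fix a regular $f:\Omega'\to\Omega$ and an $I\in\s$. Since $f$ is regular, its restriction $f_I$ is holomorphic as a map from $(\Omega'_I,I)$ into $(\hh,I)\cong(\cc^2,i)$, so $(u\circ f)_I=u\circ f_I$ is the composition of the $I$-plurisubharmonic function $u$ with a holomorphic map whose source has complex dimension one. By the invariance of plurisubharmonicity under holomorphic composition---which holds for merely upper semicontinuous functions---$u\circ f_I$ is subharmonic on $\Omega'_I$. As $I$ was arbitrary, $u\circ f$ is weakly subharmonic; the harmonic statement follows by applying the same argument to both $u$ and $-u$.

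For the converse I would reproduce, now at the level of circles, the affine reparametrization already used in Theorem~\ref{compositionstrongregularc2}. Assume $u\circ f$ is weakly subharmonic for every regular $f:\Omega'\to\Omega$. By Proposition~\ref{stronglymean} it suffices to prove $u(a)\le l_I(u;a,b)$ whenever $I\in\s$, $a\in\Omega$ and $b\in\hh\setminus\{0\}$ are such that $\Gamma_{I,a,b}$, together with the disk it bounds in the complex line $a+L_Ib$, lies in $\Omega$. Given such data, set $f(q):=a+qb$: this is a regular polynomial which, with $\Omega':=f^{-1}(\Omega)$, restricts to a regular function $f:\Omega'\to\Omega$. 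The decisive observation is that $f(e^{I\vartheta})=a+e^{I\vartheta}b$, so that $f$ carries the unit circle $\Gamma_{I,0,1}\subset\Omega'_I$ onto $\Gamma_{I,a,b}$; consequently $u\circ f(0)=u(a)$ and $l_I(u\circ f;0,1)=l_I(u;a,b)$. Since $u\circ f$ is weakly subharmonic by hypothesis, Proposition~\ref{weaklymean} yields $u\circ f(0)\le l_I(u\circ f;0,1)$, which is exactly the inequality sought. Letting $\Gamma_{I,a,b}$ range over all admissible circles and invoking Proposition~\ref{stronglymean} shows $u$ to be strongly subharmonic, and the harmonic case follows by reading every inequality as an equality.

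The step I expect to require the most care is the domain bookkeeping in the converse. One must ensure that it is the \emph{disk} bounded by $\Gamma_{I,a,b}$, and not merely the circle, that lies in $\Omega$: this is what places the closed unit disk inside $\Omega'_I$ and thereby makes the sub-mean-value inequality of Proposition~\ref{weaklymean} genuinely applicable to $u\circ f$ on the unit circle, since a subharmonic function need not satisfy the circle inequality once the enclosed disk leaves the domain. One must also check that $\Omega'=f^{-1}(\Omega)$ qualifies as a domain---it is open and connected because $q\mapsto a+qb$ is an affine homeomorphism of $\hh$---and that $f$ so restricted counts as a regular function into $\Omega$ in the sense of the hypothesis, noting that here $\Omega'$ is allowed to be an arbitrary domain rather than a symmetric slice domain. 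These are geometric matters that reduce to the affine form of $f$; the analytic substance is carried entirely by the two mean-value propositions.
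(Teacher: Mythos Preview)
Your proof is correct and follows essentially the same route as the paper: the forward implication uses that $f_I:(\Omega'_I,I)\to(\Omega,I)$ is holomorphic and that $I$-plurisubharmonicity is preserved under holomorphic pull-back, while the converse reduces the strong sub-mean-value inequality of Proposition~\ref{stronglymean} to the weak one of Proposition~\ref{weaklymean} via the affine map $f(q)=a+qb$. Your additional care about the disk (not just the circle) lying in $\Omega$, and about $\Omega'=f^{-1}(\Omega)$ being a legitimate domain for the hypothesis, is well placed; the paper's proof passes over these points silently, but they are exactly the verifications needed to make the argument airtight.
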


\begin{proof}
Let us suppose the composition $u \circ f$ with any regular function $f:\Omega' \to\Omega$ to be weakly subharmonic and let us prove that $u$ is strongly subharmonic. By Proposition~\ref{stronglymean}, it suffices to prove that, for any $I \in\s, a\in\Omega, b\in\hh\setminus\{0\}$ such that $\Gamma_{I,a,b}\subset\Omega$, the inequality
\[u(a) \leq l_I(u; a, b)\]
holds. If we set $f(q):=a+qb$, then $f(0)=a$ and $f$ maps the circle $\Gamma_{I,0,1}$ into the circle $\Gamma_{I,a,b}$. Thus, it suffices to prove that
\[u(f(0)) \leq l_I(u\circ f; 0, 1).\]
But this inequality is true by Proposition~\ref{weaklymean}, since $u \circ f$ is weakly subharmonic in a domain $\Omega'$ such that $\Gamma_{I,0,1}\subset\Omega'_I$. Analogous considerations can be made for the harmonic case.

Conversely, let $u \in \us(\Omega)$ be strongly (sub)harmonic, let $f : \Omega' \to \Omega$ be a regular function and let us prove that $u \circ f$ is weakly (sub)harmonic. For each $I\in\s$, $u$ is $I$-pluri(sub)harmonic and the restriction $f_I$ is a holomorphic map from $(\Omega'_I,I)$ to $(\Omega,I)$. As a consequence, $(u \circ f)_I=u\circ f_I$ is (sub)harmonic in $\Omega'_I$, as desired.
\end{proof}

A form of maximum modulus principle holds for weakly or strongly plurisubharmonic functions.

\begin{proposition}
Let $\Omega$ be a domain in $\hh$ and suppose $u \in \us(\Omega)$ to be weakly subharmonic. If $u$ has a local maximum point $p \in \Omega_I$ then $u_I$ is constant in the connected component of $\Omega_I$ that includes $p$. If, moreover, $u$ is strongly subharmonic, then $u$ is constant in $\Omega$.
\end{proposition}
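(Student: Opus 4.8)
The plan is to reduce each assertion to a classical maximum principle, exploiting that both notions of subharmonicity are, by their very definitions, (pluri)subharmonicity of suitable restrictions or reductions of $u$.

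For the first assertion I would start from the fact that weak subharmonicity means precisely that $u_I=u_{|_{\Omega_I}}$ is subharmonic on $\Omega_I$, after the identification $L_I\cong\cc$. A local maximum of $u$ at $p$ in $\Omega$ is, \emph{a fortiori}, a local maximum of the restriction $u_I$ at $p$ in $\Omega_I$: if $u(q)\leq u(p)$ for all $q$ in a neighborhood of $p$ in $\Omega$, the same holds for all $q$ in the corresponding neighborhood of $p$ in $\Omega_I$. At this point the one-variable maximum principle for subharmonic functions applies and gives that $u_I$ is constant on the connected component of $\Omega_I$ containing $p$. Concretely, the sub-mean-value inequality of Proposition~\ref{weaklymean}, $u_I(p)\leq l_I(u;p,b)$, combined with $u_I\leq u_I(p)$ near $p$, forces equality on small circles and hence, by upper semicontinuity, local constancy at the value $u(p)$; this value then propagates across the component by a connectedness argument.

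For the second assertion I would use that strong subharmonicity means $u$ is plurisubharmonic with respect to the constant complex structure $I$; equivalently, under the biholomorphism $(\hh,I)\cong(\cc^2,i)$ recalled in Section~\ref{sectiondifferential}, $u$ is plurisubharmonic on the domain corresponding to $\Omega$, and therefore subharmonic as a function of the four real coordinates on the connected open set $\Omega\subseteq\rr^4$. The local maximum of $u$ at $p$ now lives in this genuinely four-dimensional setting, so the maximum principle for subharmonic functions on a connected domain of $\rr^4$ yields that $u$ is constant on all of $\Omega$. The mean-value characterization of Proposition~\ref{stronglymean} gives the same conclusion more directly: since the inequality $u(a)\leq l_I(u;a,b)$ is available for \emph{every} $I\in\s$ and every direction $b\in\hh\setminus\{0\}$, and not merely for $b\in L_I$, equality on the small spheres around $p$ propagates the maximal value throughout $\Omega$.

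The step I expect to be the main obstacle is the passage from local constancy to constancy on the whole connected set, that is, the propagation mechanism built into the strong maximum principle rather than its purely local content. For the second assertion, the decisive point that upgrades the conclusion from a single slice to all of $\Omega$ is precisely that strong subharmonicity supplies honest subharmonicity in all four real variables through $I$-plurisubharmonicity, whereas weak subharmonicity only controls $u$ along the complex lines $L_I$; keeping track of which ambient dimension the maximum principle is being applied in is therefore the part that requires the most care.
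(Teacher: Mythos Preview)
Your proposal is correct and follows essentially the same route as the paper: reduce the first assertion to the classical maximum principle for subharmonic functions applied to $u_I$ on $\Omega_I$, and the second to the maximum principle for ($I$-)plurisubharmonic functions on the connected domain $\Omega$. The paper simply cites the two relevant results from Klimek's monograph rather than unpacking the sub-mean-value propagation, and it invokes the plurisubharmonic maximum principle directly instead of passing through subharmonicity in $\rr^4$, but these are presentational differences only.
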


\begin{proof}
In our hypotheses, $u_I$ is a subharmonic function with a local maximum point $p \in \Omega_I$. Thus, $u_I$ is constant in the connected component of $\Omega_I$ that includes $p$ by the maximum modulus principle for subharmonic functions~\cite[Theorem 2.4.2]{klimek}.

If, moreover, $u$ is strongly subharmonic then it is $I$-plurisubharmonic. Since we assumed $\Omega$ to be connected, $u$ is constant in $\Omega$ by the maximum modulus principle for plurisubharmonic functions~\cite[Corollary 2.9.9]{klimek}.
\end{proof}

Let us now consider maximality.

\begin{definition}
Let $S$ be a class of real-valued functions on an open set $D$ and let $\upsilon$ be an element of $S$. Suppose that, for any relatively compact subset $G$ of $D$ and for all $\nu\in S$ with $\nu \leq \upsilon$ in $\partial G$, the inequality $\nu \leq \upsilon$ holds throughout $G$. In this situation, we say that $\upsilon$ is \emph{maximal} in $S$ (or among the elements of $S$).
\end{definition}

The following characterization of weak harmonicity immediately follows from \cite[\S3.1]{klimek}.

\begin{remark}
Let $\Omega$ be a domain in $\hh$ and let $u\in\wsh(\Omega)$. $u$ is weakly harmonic if, and only if, for all $I \in \s$, the restriction $u_I$ is maximal among subharmonic functions on $\Omega_I$.
As a consequence, if $u$ is weakly harmonic then $u$ is maximal in $\wsh(\Omega)$.
\end{remark}

Let us now consider strongly subharmonic functions. Since they are plurisubharmonic with respect to all constant structures, we can make the following observation.

\begin{remark}
Let $\Omega$ be a domain in $\hh$ and let $u\in \ssh(\Omega)$. If $u$ is strongly harmonic then it is maximal in $\ssh(\Omega)$. Furthermore, if $u \in C^2(\Omega)$ then $u$ is maximal in $\ssh(\Omega)$ if and only if $\det H_{I,J}(u) \equiv 0$ for all $I,J \in \s$ with $I \perp J$.
\end{remark}

It is easy to exhibit a maximal element of $\ssh(\Omega)$ that is not strongly harmonic.

\begin{example}
The function $u(q) = \log|q|$ is a strongly subharmonic function on $\hh$. The explicit computations in Example \ref{log} show that $u$ maximal but not strongly harmonic. 
\end{example}


\section{Approximation}\label{sectionmapproximation}

An approximation result holds for strongly subharmonic functions. For all $\varepsilon >0$, let
\[\Omega_{\varepsilon}:=\left\{ 
\begin{array}{ll}
\{q \in \Omega : \mathrm{dist}(q, \partial \Omega)>\varepsilon\}&\mathrm{\ if\ }\Omega\neq\hh\\
\,\hh&\mathrm{\ if\ }\Omega=\hh
\end{array}
\right.\]
and let $u*\chi_\varepsilon$ denote the convolution of $u$ with the standard smoothing kernel $\chi_\varepsilon$ of $\hh\cong\rr^4$. For further details, see \cite[\S2.5]{klimek}.

\begin{proposition}\label{approximation}
Let $u \in \ssh(\Omega)$. If $\varepsilon>0$ is such that $\Omega_{\varepsilon}$ is not empty, then $u*\chi_\varepsilon \in C^{\infty}  \cap \ssh(\Omega_\varepsilon)$. Moreover, $u*\chi_\varepsilon$ monotonically decreases with decreasing $\varepsilon$ and 
\begin{equation}
\lim_{\varepsilon \to 0^+} u*\chi_\varepsilon (q) = u(q)
\end{equation}
for each $q \in \Omega$.
\end{proposition}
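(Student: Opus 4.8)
The plan is to deduce the whole statement from the classical smoothing theory for plurisubharmonic functions in $\cc^2$, as developed in \cite[\S2.5]{klimek}. The point is that, by definition, $u\in\ssh(\Omega)$ means precisely that $u$ is $I$-plurisubharmonic for every constant orthogonal complex structure $I\in\s$, while the convolution $u*\chi_\varepsilon$ is a \emph{single} function, computed once and for all in $\hh\cong\rr^4$. The hard part will be to argue that this one function is simultaneously the plurisubharmonic regularization of $u$ with respect to \emph{every} $I$; once this is established, smoothness, strong subharmonicity, monotonicity and the pointwise limit all follow by quoting the complex theory.

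To set this up, I would fix $I\in\s$, complete it to an orthonormal basis $1,I,J,IJ$, and use the induced identification $q\mapsto(z_1(q),z_2(q))$ between $(\hh,I)$ and $(\cc^2,i)$ recalled in Section \ref{sectiondifferential}. Since the coordinates $x_0,x_1,x_2,x_3$ form an orthonormal frame, this identification is a Euclidean isometry; hence it carries the standard radial kernel $\chi_\varepsilon$ of $\rr^4$ to the standard radial mollifier of $\cc^2$ and carries $\Omega_\varepsilon$, defined through the Euclidean distance to $\partial\Omega$, to the analogously shrunk domain in $\cc^2$. Because $u$ is in particular subharmonic on $\rr^4$ (being $I$-plurisubharmonic), it is locally integrable and the convolution is well defined; moreover, under the above identification, $u*\chi_\varepsilon$ coincides with the classical mollification of the plurisubharmonic function $u$ on $(\cc^2,i)$.

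It then remains to invoke \cite[\S2.5]{klimek} for this fixed $I$. That reference gives at once that $u*\chi_\varepsilon\in C^\infty$, that it is $I$-plurisubharmonic on $\Omega_\varepsilon$, that it decreases as $\varepsilon$ decreases --- this monotonicity being exactly where the radial symmetry of $\chi_\varepsilon$ and the sub-mean-value property of subharmonic functions over spheres are used --- and that $u*\chi_\varepsilon(q)\to u(q)$ as $\varepsilon\to0^+$, by upper semicontinuity of $u$. The smoothness, the monotonicity and the pointwise limit concern only the single function $u*\chi_\varepsilon$ and the single subharmonic function $u$ on $\rr^4$, so they are independent of the auxiliary choice of $I$. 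Finally, since $I\in\s$ was arbitrary, the $I$-plurisubharmonicity of $u*\chi_\varepsilon$ holds for every constant structure, that is, $u*\chi_\varepsilon\in\ssh(\Omega_\varepsilon)$, completing the proof. The only subtlety to double-check is precisely the isometry claim of the second paragraph, which guarantees that one radial mollification works verbatim for all $I$ at once.
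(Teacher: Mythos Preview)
Your proposal is correct and follows essentially the same approach as the paper: fix an arbitrary $I\in\s$, invoke the classical approximation theorem for plurisubharmonic functions from \cite{klimek} (the paper cites Theorem~2.9.2 there), and then let $I$ vary to conclude that $u*\chi_\varepsilon\in\ssh(\Omega_\varepsilon)$. Your additional care in checking that the identification $(\hh,I)\cong(\cc^2,i)$ is a Euclidean isometry, so that the single radial mollifier $\chi_\varepsilon$ on $\rr^4$ serves simultaneously for every $I$, is exactly the point the paper leaves implicit.
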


\begin{proof}
Fix any $I\in\s$, so that $u$ is $I$-plurisubharmonic. By~\cite[Theorem 2.9.2]{klimek}, $u*\chi_\varepsilon$ is both an element of $C^{\infty}$ and an $I$-plurisubharmonic function. For the same reason, our second statement also holds true.
\end{proof}

On the other hand, convolution with the standard smoothing kernel $\chi_\varepsilon$ does not preserve weak subharmonicity. This can be shown with an example.

\begin{example}
The function $u(q) = Re(q^2)$ is in $\wsh(\hh)$, but $u*\chi_\varepsilon$ does not belong to $\wsh(\hh)$. Indeed, we saw that for each orthonormal basis $1,I,J,IJ$ of $\hh$ we have $H_{I,J}(u)\equiv
\left( \begin{array}{cc}
0 & 0\\
0 & -1
\end{array}\right)$. Hence, $-u$ is strongly subharmonic and the same is true for $-u*\chi_\varepsilon$ by the previous proposition. In particular, $-u*\chi_\varepsilon \in \wsh(\hh)$ so that $u*\chi_\varepsilon$ can only be in $\wsh(\hh)$ if it is weakly \emph{harmonic}. This amounts to requiring that for each $I \in \s, a \in \Omega_I, b \in L_I\setminus\{0\}$ such that $\Gamma_{I,a,b} \subset \Omega$ the equality
\begin{equation*}
u*\chi_\varepsilon(a) = l_I(u*\chi_\varepsilon; a, b) = l_I(u; \cdot, b) * \chi_\varepsilon (a),
\end{equation*}
holds. But this happens if, and only if, $u(a-q) = l_I(u; a-q, b)$ for all $q$ in the support $\overline{B(0,\varepsilon)}$ of $\chi_\varepsilon$. This cannot be true, since (if $z_1,z_2$ denote the complex variables with respect to the orthonormal basis $1,I,J,IJ$) the function $-u$ is strictly subharmonic in $z_2$.
\end{example}


\section{Green's functions}

We now consider the analogs of Green's functions in the context of weakly and strongly subharmonic functions.

\begin{definition}\label{green}
Let $\Omega$ be a domain in $\hh$, let $q_0 \in \Omega$, and set
\begin{align*}
&\wsh_{q_0}(\Omega):=\left\{u \in \wsh(\Omega) : u<0, \limsup_{q \to q_0} \big|u(q)-\log|q-q_0|\big| < \infty\right\}\\
&\ssh_{q_0}(\Omega):=\wsh_{q_0}(\Omega) \cap \ssh(\Omega)\,.
\end{align*}
For all $q\in\Omega$, let us define
\begin{align*}
w(q) &:= \left\{
\begin{array}{ll}
-\infty& \mathrm{\ if\ }\wsh_{q_0}(\Omega) = \emptyset \\
\sup\{u(q): u\in \wsh_{q_0}(\Omega) \} &\mathrm{\ otherwise}
\end{array}
\right.\\
s(q) &:= \left\{
\begin{array}{ll}
-\infty& \mathrm{\ if\ }\ssh_{q_0}(\Omega) = \emptyset \\
\sup\{u(q): u\in \ssh_{q_0}(\Omega) \} &\mathrm{\ otherwise}
\end{array}
\right.
\end{align*}
 The \emph{Green function} of $\Omega$ with logarithmic pole at $q_0$, denoted $g^\Omega_{q_0}$, is the upper semicontinuous regularization $w^*$ of $w$. The \emph{strongly subharmonic Green function} of $\Omega$ with logarithmic pole at $q_0$, denoted $G^\Omega_{q_0}$, is the upper semicontinuous regularization $s^*$ of $s$.
\end{definition}

\begin{remark}
By construction, $G^\Omega_{q_0}(q) \leq g^\Omega_{q_0}(q)$ for all $q\in\Omega$. Moreover, any inclusion $\Omega'\subseteq\Omega$ implies $g^\Omega_{q_0}(q) \leq g^{\Omega'}_{q_0}(q)$ and $G^\Omega_{q_0}(q) \leq G^{\Omega'}_{q_0}(q)$.
\end{remark}

Let us construct a basic example. We will use the notations $\B := B(0,1)$, where 
\begin{equation*}
B(q_0,R) := \{q \in \hh : |q-q_0|<R\}
\end{equation*}
for all $q_0 \in \hh, R>0$, and $\B_I := \B \cap L_I$.

\begin{example}
We can easily prove that
\[G^\B_{0}(q)=g^\B_{0}(q)=\log|q|\]
for all $q\in\B$. Indeed, $q\mapsto\log|q|$ is clearly an element of $\ssh_{0}(\B)\subseteq\wsh_{0}(\B)$. Furthermore, for each $u\in\wsh_{0}(\B)$, the inequality $u(q)\leq\log|q|$ holds throughout $\B$. Indeed, for all $I\in\s$ it holds $u_I(z)\leq\log|z|$ for all $z\in\B_I$ because $z \mapsto \log|z|$ is the (complex) Green function of the disc $\B_I$.
\end{example}

Further examples can be derived by means of the next results.

\begin{lemma}\label{stonggreenequality}
Let $f$ be any affine transformation of $\hh$, let $\Omega$ be a domain in $\hh$ and fix $q_0 \in \Omega$. Then
\[G^{f(\Omega)}_{f(q_0)}(f(q)) = G^{\Omega}_{q_0}(q)\]
for all $q\in\Omega$.
\end{lemma}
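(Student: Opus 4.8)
The plan is to read the affine transformation $f$ as a map $f(q) = a+qb$ with $a\in\hh$ and $b\in\hh\setminus\{0\}$, which is the class governed by Remark~\ref{affine}: right multiplication by $b$ commutes with every constant complex structure $v\mapsto Iv$ and is therefore holomorphic with respect to all of them, whereas left multiplication is not. Such an $f$ is an invertible $\rr$-affine map, hence a homeomorphism carrying the domain $\Omega$ onto the domain $f(\Omega)$, and its inverse $f^{-1}(p)=(p-a)b^{-1}$ is of the same type. The core of the argument is that precomposition with $f^{-1}$ gives a bijection between the two families of competitors in Definition~\ref{green},
\[\Phi : \ssh_{q_0}(\Omega) \longrightarrow \ssh_{f(q_0)}(f(\Omega)), \qquad \Phi(u) = u \circ f^{-1}.\]

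First I would check that $\Phi$ is well defined by verifying the three defining conditions for $\tilde u := u\circ f^{-1}$. Strong subharmonicity of $\tilde u$ on $f(\Omega)$ is immediate from Remark~\ref{affine} applied to the affine map $f^{-1}$, and $\tilde u<0$ follows at once from $u<0$. For the logarithmic pole, the key computation is that for $p=f(q)$ one has $p-f(q_0)=(q-q_0)b$, whence $|p-f(q_0)|=|b|\,|q-q_0|$ and
\[\log|p-f(q_0)| = \log|q-q_0| + \log|b|\,.\]
Since $\tilde u(p)=u(q)$, the quantity $|\tilde u(p)-\log|p-f(q_0)||$ differs from $|u(q)-\log|q-q_0||$ by at most $|\log|b||$, so the finiteness of the $\limsup$ as $q\to q_0$ is preserved, with the same logarithmic coefficient $1$. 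Running the same reasoning with $f$ in place of $f^{-1}$ shows that $\tilde u\mapsto\tilde u\circ f$ is a two-sided inverse of $\Phi$, so $\Phi$ is a bijection.

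Next I would transfer the suprema. Writing $s^\Omega$ and $s^{f(\Omega)}$ for the function $s$ of Definition~\ref{green} relative to $(\Omega,q_0)$ and $(f(\Omega),f(q_0))$, the bijection $\Phi$ yields
\[s^{f(\Omega)}(f(q)) = \sup_{\tilde u \in \ssh_{f(q_0)}(f(\Omega))} \tilde u(f(q)) = \sup_{u \in \ssh_{q_0}(\Omega)} u(q) = s^\Omega(q)\]
for all $q\in\Omega$, the empty-family case producing $-\infty$ on both sides simultaneously because $\Phi$ is a bijection. Finally, since $f$ is a homeomorphism, the upper semicontinuous regularization commutes with precomposition by $f$, that is $(g\circ f)^*=g^*\circ f$ for any $g$; taking $g=s^{f(\Omega)}$ and recalling that $G=s^*$ gives
\[G^{f(\Omega)}_{f(q_0)}(f(q)) = (s^{f(\Omega)})^*(f(q)) = (s^{f(\Omega)}\circ f)^*(q) = (s^\Omega)^*(q) = G^{\Omega}_{q_0}(q)\,,\]
as desired.

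I expect the main obstacle to be the logarithmic pole condition: one must confirm that the rescaling of distances by the factor $|b|$ enters the comparison with $\log|\cdot\,|$ only through the bounded additive constant $\log|b|$, leaving the coefficient of the logarithmic singularity unchanged, so that membership in the classes $\ssh_{q_0}$ is genuinely preserved in both directions. The remaining ingredients—strong subharmonicity via Remark~\ref{affine} and the commutation of upper semicontinuous regularization with a homeomorphism—are routine.
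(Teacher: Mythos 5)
Your proposal is correct and takes essentially the same approach as the paper: the paper likewise applies Remark~\ref{affine} to both $f$ and $f^{-1}$ to obtain the bijection $\ssh_{q_0}(\Omega)=\{u\circ f : u\in \ssh_{f(q_0)}(f(\Omega))\}$ and then concludes directly from Definition~\ref{green}. Your write-up simply makes explicit the details the paper leaves implicit, namely the preservation of the logarithmic pole up to the additive constant $\log|b|$, the transfer of suprema, and the fact that upper semicontinuous regularization commutes with precomposition by the homeomorphism $f$.
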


\begin{proof}
By repeated applications of Remark \ref{affine}, we conclude that
\[\ssh_{q_0}(\Omega)=\{u\circ f : u\in \ssh_{f(q_0)}(f(\Omega))\}.\]
Thanks to this equality, the statement immediately follows from Definition~\ref{green}.
\end{proof}

\begin{lemma}\label{greeninequalities}
Let $\Omega$ be a symmetric slice domain in $\hh$, fix $q_0 \in \Omega$ and take a regular function $f:\Omega\to\hh$.
Then 
\[G^{f(\Omega)}_{f(q_0)}(f(q)) \leq g^{\Omega}_{q_0}(q)\]
for all $q\in\Omega$. If, moreover, $f$ is slice preserving then 
\[g^{f(\Omega)}_{f(q_0)}(f(q)) \leq g^{\Omega}_{q_0}(q)\]
for all $q\in\Omega$. If, additionally, $f$ admits a regular inverse $f^{-1}:f(\Omega)\to\Omega$, then the last inequality becomes an equality at all $q\in\Omega$.
\end{lemma}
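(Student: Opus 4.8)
The plan is to prove the three assertions in order, reducing each to two ingredients: that the Green function appearing on the larger side belongs to its own defining class, and that composition with $f$ converts the relevant flavour of subharmonicity into weak subharmonicity. For the first inequality write $G:=G^{f(\Omega)}_{f(q_0)}$. The supremum defining $G$ is taken over negative functions, hence is locally bounded above, so its upper semicontinuous regularization is strongly subharmonic by the cone property recorded in Section~\ref{sectionsubharmonicity}. Since $q\mapsto\log|q-f(q_0)|$ is itself strongly subharmonic (Example~\ref{log}), it furnishes a barrier from below showing that $G$ retains an exactly logarithmic pole; thus $G\in\ssh_{f(q_0)}(f(\Omega))$, i.e. $G$ is strongly subharmonic, negative, and $\limsup_{p\to f(q_0)}\big|G(p)-\log|p-f(q_0)|\big|<\infty$. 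By Theorem~\ref{compositionstrongregular} the composition $G\circ f$ is then weakly subharmonic on $\Omega$, and it is negative. As $f$ is real-differentiable it is locally Lipschitz near $q_0$, so $|f(q)-f(q_0)|\le L|q-q_0|$ there and hence $G(f(q))\le\log|q-q_0|+\mathrm{const}$ as $q\to q_0$; that is, $G\circ f$ has a one-sided logarithmic upper bound at $q_0$.

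The delicate point is that $f$ may vanish to higher order at $q_0$ (or identify several preimages), so $G\circ f$ may have a strictly stronger singularity than $\log|q-q_0|$ and therefore need not itself lie in $\wsh_{q_0}(\Omega)$. I would bypass this by a truncation. Fixing any $\psi\in\wsh_{q_0}(\Omega)$ (with its genuine logarithmic pole), set
\[ w(q):=\max\{\,G(f(q)),\ \psi(q)\,\}. \]
Then $w$ is negative and weakly subharmonic, it dominates $G\circ f$, and near $q_0$ the one-sided upper bound together with $w\ge\psi$ pins its behaviour to $\log|q-q_0|+O(1)$; hence $w\in\wsh_{q_0}(\Omega)$. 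Consequently $G\circ f\le w\le g^{\Omega}_{q_0}$, which is the first inequality. (If $\wsh_{q_0}(\Omega)=\emptyset$ the claim is vacuous; when $\Omega$ is bounded one may equally take $\psi(q)=\log|q-q_0|+t$ for $t$ negative enough.)

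The second inequality runs identically with $g:=g^{f(\Omega)}_{f(q_0)}\in\wsh_{f(q_0)}(f(\Omega))$ in place of $G$, the only change being that $g$ is merely weakly subharmonic; to deduce that $g\circ f$ is weakly subharmonic I invoke Theorem~\ref{compositionweakregular}, which is exactly where the hypothesis that $f$ be slice preserving enters. The same truncation gives $g\circ f\le g^{\Omega}_{q_0}$. For the third statement, the stated inequality is the second one, while the reverse inequality follows by applying the second statement to the regular inverse $f^{-1}:f(\Omega)\to\Omega$---which is again slice preserving, being the inverse of a slice-preserving bijection---with base point $f(q_0)$: this yields $g^{\Omega}_{q_0}(f^{-1}(p))\le g^{f(\Omega)}_{f(q_0)}(p)$, and the substitution $p=f(q)$ produces the opposite inequality, whence equality.

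I expect the main obstacle to be precisely the pole analysis flagged above: the possibly higher-order vanishing of $f$ means the composed function is generally not a legitimate competitor, and the crux is to dominate it by one that is, which the truncation $w$ accomplishes. Working with $G$ (resp. $g$) directly, rather than with an arbitrary competitor followed by a passage to the supremum, is what keeps this clean and sidesteps the otherwise awkward interchange of the defining supremum with the upper semicontinuous regularization when $f$ is not injective.
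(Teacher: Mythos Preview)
Your argument is essentially correct and, in one respect, more careful than the paper's. The paper's proof simply asserts the inclusion
\[
\{u\circ f : u\in \ssh_{f(q_0)}(f(\Omega))\}\subseteq \wsh_{q_0}(\Omega)
\]
(and the analogous one with $\wsh$ on the right when $f$ is slice preserving), and then reads off the inequalities from Definition~\ref{green}. You are right to flag that this inclusion can fail at the pole: if $f-f(q_0)$ vanishes to order $\ge 2$ at $q_0$ (e.g.\ $f(q)=q^2$ at $q_0=0$), then for $u(p)=\log|p|\in\ssh_0(\B)$ one gets $(u\circ f)(q)=2\log|q|$, which violates the two-sided logarithmic condition defining $\wsh_{q_0}$. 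Your truncation $w=\max\{G\circ f,\psi\}$ is precisely the device that repairs this: it dominates $G\circ f$ while restoring the correct pole behaviour, so one still obtains $G\circ f\le w\le g^\Omega_{q_0}$. The paper's route is terser but glosses over this point; yours makes the argument honest, and working with $G$ directly rather than with each competitor sidesteps any interaction between the supremum and the upper semicontinuous regularization.

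One place where your own argument is not quite complete: you say that the barrier $\log|p-f(q_0)|$ from below already shows that $G$ has an \emph{exactly} logarithmic pole, but a lower barrier only yields $G(p)\ge\log|p-f(q_0)|+c$. The companion upper bound $G(p)\le\log|p-f(q_0)|+C$ near $f(q_0)$---which you then feed into the Lipschitz estimate to get $G(f(q))\le\log|q-q_0|+\mathrm{const}$---needs a separate justification, for instance the monotonicity $G^{f(\Omega)}_{f(q_0)}\le G^{B(f(q_0),r)}_{f(q_0)}=\log\tfrac{|p-f(q_0)|}{r}$ once you know $f(\Omega)$ contains a ball about $f(q_0)$ (Open Mapping Theorem for non-constant regular functions on symmetric slice domains). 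Also, the case $\wsh_{q_0}(\Omega)=\emptyset$ is not vacuous as you claim: the asserted inequality would then read $G(f(q))\le-\infty$, which is not automatic and would itself require an argument.
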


\begin{proof}
By Theorem~\ref{compositionstrongregular},
\[\wsh_{q_0}(\Omega)\supseteq\{u\circ f : u\in \ssh_{f(q_0)}(f(\Omega))\}.\]
If $f$ is a slice preserving regular function then, by Theorem~\ref{compositionweakregular},
\[\wsh_{q_0}(\Omega)\supseteq\{u\circ f : u\in \wsh_{f(q_0)}(f(\Omega))\}.\]
The last inclusion is actually an equality if $f$ admits a regular inverse $f^{-1}:f(\Omega)\to\Omega$, which is necessarily slice preserving.
The three statements now follow from Definition~\ref{green}.
\end{proof}

In the last statement, we assumed $\Omega$ to be a symmetric slice domain for the sake of simplicity. The result could, however, be extended to all slice domains.

Lemmas~\ref{stonggreenequality} and~\ref{greeninequalities}, along with the preceding example, yield what follows.

\begin{example}
For each $x_0\in\rr$ and each $R>0$, the equalities
\[\log\frac{|q-x_0|}{R}=G^{B(x_0,R)}_{x_0}(q)=g^{B(x_0,R)}_{x_0}(q)\]
hold for all $q\in B(x_0,R)$. We point out that this function is strongly subharmonic and weakly harmonic in $B(x_0,R)$.
\end{example}

\begin{example}\label{ball}
For each $q_0\in\hh$ and each $R>0$ it holds
\[\log\frac{|q-q_0|}{R}=G^{B(q_0,R)}_{q_0}(q)\leq g^{B(q_0,R)}_{q_0}(q)\]
for all $q\in B=B(q_0,R)$. We point out that $G^{B}_{q_0}$, though strongly and weakly subharmonic, is not weakly \emph{harmonic} if $q_0 \not \in \rr$. Indeed, if we fix an orthonormal basis $1,I,J,IJ$ and write $q_0 = q_1+ q_2J$ with $q_1,q_2 \in L_I$, then by Lemma \ref{laplacian}
\[\left(\bar \partial_1 \partial_1 G^B_{q_0}\right)_{|_z} = \frac1{R^2} \left( \bar \partial_1 \partial_1 G^\B_{0}\right)_{|_{\frac{z-q_0}R}} = \frac{|q_2|^2}{2 (|z-q_1|^2 + |q_2|^2)^2}\]
for $z \in L_I$. This expression only vanishes when $q_2 = 0$, that is, when $q_0 \in L_I$.
\end{example}

We are now in a position to make the next remarks.

\begin{remark}
Let $\Omega$ be a bounded domain in $\hh$ and let $q_0 \in \Omega$. For all $r,R>0$ such that $B(q_0,r) \subseteq \Omega \subseteq B(q_0,R)$, we have that 
\[\log\frac{|q-q_0|}{R} \leq G^\Omega_{q_0}(q)\leq \log\frac{|q-q_0|}{r}.\]
As a consequence, $G^\Omega_{q_0}$ is not identically equal to $-\infty$, it belongs to $\ssh_{q_0}(\Omega)$ and it coincides with the supremum $s$ appearing in Definition \ref{green}.
\end{remark}

\begin{remark}
Let $\Omega$ be a bounded domain in $\hh$ and let $q_0 \in \Omega$. For all $R>0$ such that $\Omega \subseteq B(q_0,R)$, we have that 
\[\log\frac{|q-q_0|}{R} \leq g^\Omega_{q_0}(q)\,,\]
whence $g^\Omega_{q_0}$ is not identically equal to $-\infty$. If, moreover, $q_0=x_0\in\rr$ then for all $r>0$ such that $B(x_0,r) \subseteq \Omega$ we have that 
\[g^\Omega_{x_0}(q)\leq \log\frac{|q-x_0|}{r}.\]
In this case, $g^\Omega_{x_0}$ belongs to $\wsh_{x_0}(\Omega)$ and it coincides with the supremum $w$ appearing in Definition \ref{green}.
\end{remark}

When $\Omega$ admits a well-behaved exhaustion function, we can prove a few further properties.

\begin{theorem}
Let $\Omega$ be a bounded domain in $\hh$. Suppose there exists $\rho\in C^0(\Omega, (-\infty,0)) \cap \ssh(\Omega)$ such that $\{q \in \Omega: \rho(q) <c\} \subset\subset \Omega$ for all $c<0$. Then for all $q_0 \in \Omega$ and for all $p \in \partial \Omega$
\[\lim_{q \to p} G^\Omega_{q_0}(q) =0.\]
Moreover, $G^\Omega_{q_0}$ is continuous in $\Omega\setminus\{q_0\}$.
\end{theorem}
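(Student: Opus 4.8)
The plan is to prove the two assertions separately, in both cases exploiting that a strongly subharmonic function is, by definition, plurisubharmonic with respect to every constant structure $I\in\s$; in particular $\rho$ is a negative continuous strongly subharmonic exhaustion, the quaternionic analogue of hyperconvexity. I will use throughout that, by the remark preceding the statement, $G^\Omega_{q_0}$ coincides with the supremum $s$ of Definition~\ref{green}, belongs to $\ssh_{q_0}(\Omega)$ (so it is in particular upper semicontinuous), and satisfies $\log\frac{|q-q_0|}{R}\le G^\Omega_{q_0}\le 0$ whenever $\Omega\subseteq B(q_0,R)$. I also note that the exhaustion property $\{\rho<c\}\subset\subset\Omega$ forces $\rho(q)\to 0$ as $q\to\partial\Omega$.

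For the boundary limit, the inequality $\limsup_{q\to p}G^\Omega_{q_0}(q)\le 0$ is immediate since $G^\Omega_{q_0}\le 0$. The key step for the reverse inequality is to produce a single competitor $v\in\ssh_{q_0}(\Omega)$ with $v\to 0$ at every boundary point; then $v\le s=G^\Omega_{q_0}\le 0$ yields $\lim_{q\to p}G^\Omega_{q_0}(q)=0$. To build $v$, fix $R>0$ with $\Omega\subseteq B(q_0,R)$ and $r\in(0,R)$ with $\overline{B(q_0,r)}\subset\Omega$, and set $\ell(q):=\log\frac{|q-q_0|}{R}$. By Example~\ref{log} and Remark~\ref{affine}, $\ell$ is strongly subharmonic on $\hh$, is negative on $\Omega$, and has a logarithmic pole at $q_0$. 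Since $\rho$ is continuous and negative, $\rho\le-\delta<0$ on the compact sphere $\partial B(q_0,r)$ for some $\delta>0$; choosing $A>0$ with $A\delta\ge\log\frac{R}{r}$ gives $A\rho\le\ell$ on $\partial B(q_0,r)$. I then set $v:=\ell$ on $\overline{B(q_0,r)}$ and $v:=\max\{\ell,A\rho\}$ on $\Omega\setminus\overline{B(q_0,r)}$. The two definitions agree on $\partial B(q_0,r)$, so the standard gluing principle for plurisubharmonic functions~\cite{klimek} makes $v$ strongly subharmonic on $\Omega$; it is negative and retains the logarithmic pole of $\ell$ near $q_0$, hence $v\in\ssh_{q_0}(\Omega)$, while near any $p\in\partial\Omega$ one has $\ell\to\log\frac{|p-q_0|}{R}\le 0$ and $A\rho\to 0$, so $v\to 0$. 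The crucial point, which caused the naive attempts to fail, is that the logarithmic pole must be carried by the \emph{base} function $\ell$ on the inner ball and never created by a maximum with the finite function $A\rho$.

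For continuity on $\Omega\setminus\{q_0\}$ I would follow Demailly's comparison scheme. As $G^\Omega_{q_0}$ is already upper semicontinuous, it suffices to establish a local modulus-of-continuity estimate away from the pole. Fix a compact $K\subset\Omega\setminus\{q_0\}$ and a small $h\in\hh$; by the affine invariance of Lemma~\ref{stonggreenequality}, the map $q\mapsto G^\Omega_{q_0}(q+h)$ equals $G^{\Omega-h}_{q_0-h}$, the strong Green function of the translated domain. On the overlap $\Omega\cap(\Omega-h)$, away from both poles, I would compare $G^\Omega_{q_0}$ with this translate corrected by an error controlled by the modulus of continuity of $\rho$ and by the boundary decay established above, and then propagate the resulting boundary estimate inward by the maximum principle for strongly subharmonic functions proved above. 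Letting $|h|\to 0$ gives equicontinuity on $K$; the region near $q_0$ is treated separately using the explicit asymptotics $G^\Omega_{q_0}=\log|q-q_0|+O(1)$, whose leading term is continuous off $q_0$.

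The main obstacle is precisely this interior continuity: the boundary limit alone does not force lower semicontinuity, and the pluripotential proof rests on solving the homogeneous Monge--Ampère (Dirichlet) problem and invoking a Walsh-type continuity theorem for the Perron--Bremermann envelope with continuous boundary data. Transplanting that machinery here requires care, since $\ssh(\Omega)$ is the intersection of the $I$-plurisubharmonic classes over all $I\in\s$ and is strictly smaller than any single one; one must verify that the comparison and envelope arguments, naturally carried out for a fixed constant structure $I$, respect the simultaneous constraint, and that competitors such as the barrier $v$ above, being strongly subharmonic, are admissible for every $I$ at once. The hypothesis on $\rho$ is exactly what makes both the barrier and the boundary-controlled comparison available.
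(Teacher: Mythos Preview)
Your treatment of the boundary limit is correct and essentially identical to the paper's: you build the barrier $v=\ell$ on $\overline{B(q_0,r)}$ and $v=\max\{\ell,A\rho\}$ outside, then squeeze $v\le G^\Omega_{q_0}\le 0$. This is exactly the paper's first paragraph of proof (with $A$ in place of the paper's $c$).

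The continuity argument, however, is where your proposal diverges from the paper and remains incomplete. You outline a Demailly-style translation comparison---identify $G^\Omega_{q_0}(\cdot+h)$ with $G^{\Omega-h}_{q_0-h}$, control the discrepancy by the modulus of continuity of $\rho$, and propagate via the maximum principle---but you do not actually carry out the comparison, and your final paragraph candidly admits that transplanting the Walsh/Perron--Bremermann machinery to the class $\ssh(\Omega)=\bigcap_{I\in\s}\mathrm{psh}_I(\Omega)$ ``requires care'' without supplying the verification. That is the gap: the step ``compare $G^\Omega_{q_0}$ with this translate corrected by an error controlled by the modulus of continuity of $\rho$'' is precisely the hard part, and nothing in the sketch explains how the error is made uniform or why the corrected function remains in $\ssh_{q_0}(\Omega)$.

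The paper avoids this entirely by a different and more direct device: it invokes Proposition~\ref{approximation} to mollify $G^\Omega_{q_0}$ into $G^\Omega_{q_0}*\chi_\delta\in C^\infty\cap\ssh(\Omega_\delta)$, then for each small $\varepsilon$ patches three explicit continuous strongly subharmonic pieces---$\alpha(q)=(1-\varepsilon)\log(\varepsilon|q-q_0|)-\varepsilon$ near the pole, $\beta=G^\Omega_{q_0}*\chi_{\delta_\varepsilon}-\varepsilon$ in the bulk, and $\gamma=\varepsilon^{-2}\rho$ near $\partial\Omega$---into a single continuous $u_\varepsilon$ with $u_\varepsilon/(1-\varepsilon)\in\ssh_{q_0}(\Omega)$. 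Since $u_\varepsilon\to G^\Omega_{q_0}$ pointwise and each $u_\varepsilon/(1-\varepsilon)\le G^\Omega_{q_0}$, one gets $G^\Omega_{q_0}=\sup_\varepsilon u_\varepsilon/(1-\varepsilon)$, hence lower semicontinuity for free. The key ingredient your sketch does not exploit is that mollification \emph{preserves} strong subharmonicity (because it preserves $I$-plurisubharmonicity for every fixed $I$ simultaneously), which is exactly what makes the continuous competitors available without any comparison argument.
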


\begin{proof}
Let $B(q_0,r) \subseteq \Omega \subseteq B(q_0,R)$ and let $c>0$ be such that $c \rho < \log \frac r R$ in $\overline{B(q_0,r)}$. If we set
\begin{equation*}
u(q) =
\left\{
\begin{array}{ll}
\log\frac{|q-q_0|}{R} & q \in \overline{B(q_0,r)}\\
\max\left\{c \rho(q), \log\frac{|q-q_0|}{R}\right\} & q \in \Omega \setminus B(q_0,r)
\end{array}
\right.
\end{equation*}
then $u \in \ssh_{q_0}(\Omega)$. Thus, $u \leq G^\Omega_{q_0}\leq 0$, where $\lim_{q \to p} u(q) =0$ for all $p \in \partial \Omega$. This prove the first statement.

To prove the second statement, we only need to prove the lower semicontinuity of $G^\Omega_{q_0}$. Let us choose $\lambda \in (0,1)$ such that $\rho < -\lambda$ in $\overline{B(q_0,\lambda)}$. For any $\varepsilon \in (0,\lambda)$ such that $\log \frac \varepsilon R>(1-\varepsilon) \log \varepsilon^2$ (whence $\log \frac \varepsilon R > \varepsilon - \frac 1 \varepsilon$), let us set
\begin{equation*}
\alpha(q) = (1-\varepsilon) \log\left(\varepsilon |q-q_0|\right) - \varepsilon
\end{equation*}
on $\overline{B(q_0, \varepsilon)}$.

By Proposition~\ref{approximation}, for each sufficiently small $\delta>0$, the convolution $G^\Omega_{q_0} * \chi_\delta$ is an element of $C^\infty\cap\ssh(\Omega_\delta)$. If we choose $\eta \in (0,\varepsilon)$ so that $(1-\varepsilon) \log (\varepsilon \eta)>\log \frac \eta R$ then we may choose $\delta = \delta_\varepsilon>0$ so that:
\begin{itemize}
\item $(1-\varepsilon) \log (\varepsilon \eta)>G^\Omega_{q_0} * \chi_\delta$ in $\partial B(q_0,\eta)$,
\item $\Omega_\delta$ includes $\rho^{-1}([-\infty,-\varepsilon^3])$, and
\item $G^\Omega_{q_0} * \chi_\delta<0$ in $\rho^{-1}(-\varepsilon^3)$.
\end{itemize}
We may then set
\[\beta(q) :=  G^\Omega_{q_0} * \chi_{\delta_\varepsilon}(q) - \varepsilon\]
for all $q\in\Omega_{\delta_\varepsilon}$ and
\[\gamma(q) :=  \varepsilon^{-2} \rho(q)\]
for all $q\in\Omega$. By construction, $\alpha,\beta,\gamma$ can be patched together in a continuous strongly subharmonic function defined on $\Omega$, namely
\begin{equation*}
u_\varepsilon :=
\left\{
\begin{array}{ll}
\alpha & \mathrm{in\ } \overline{B(q_0,\eta)}\\
\max\left\{\alpha,\beta\right\} & \mathrm{in\ } \overline{B(q_0,\varepsilon)} \setminus B(q_0,\eta)\\
\beta & \mathrm{in\ } \rho^{-1}([-\infty,-\varepsilon]) \setminus B(q_0,\varepsilon)\\
\max\left\{\beta,\gamma\right\} & \mathrm{in\ } \rho^{-1}([-\varepsilon,-\varepsilon^3])\\
\gamma & \mathrm{in\ } \Omega \setminus \rho^{-1}([-\infty,-\varepsilon^3))
\end{array}
\right.
\end{equation*}

We remark that $\rho^{-1}([-\infty,-\varepsilon]) \setminus B(q_0,\varepsilon)$ increases as $\varepsilon \to 0$ and that
\[\bigcup_{\varepsilon \in (0,\lambda)}\left( \rho^{-1}([-\infty,-\varepsilon]) \setminus B(q_0,\varepsilon) \right)= \Omega \setminus \{q_0\}\,.\]
Thus, for all $q \in \Omega \setminus \{q_0\}$,
\[\lim_{\varepsilon \to 0} u_\varepsilon(q) = \lim_{\varepsilon \to 0} G^\Omega_{q_0} * \chi_{\delta_\varepsilon}(q) - \varepsilon = G^\Omega_{q_0}(q)\,.\]
Moreover, for each $\varepsilon \in (0, \lambda)$ it holds $\frac{u_\varepsilon}{1-\varepsilon} \in \ssh_{q_0}(\Omega)$, whence $\frac{u_\varepsilon}{1-\varepsilon} \leq G^\Omega_{q_0}$ in $\Omega$. Thus,
\begin{equation*}
G^\Omega_{q_0}(q) = \sup_{\varepsilon \in (0, \lambda)} \frac{u_\varepsilon(q)}{1-\varepsilon}\,,
\end{equation*}
whence the lower semicontinuity of $G^\Omega_{q_0}$ immediately follows.
\end{proof}

Similarly:

\begin{proposition}
Let $\Omega$ be a bounded domain in $\hh$. Suppose there exists $\rho\in C^0(\Omega, (-\infty,0)) \cap \wsh(\Omega)$ such that $\{q \in \Omega: \rho(q) <c\} \subset\subset \Omega$ for all $c<0$. Then for all $q_0 \in \Omega$ and for all $p \in \partial \Omega$
\[\lim_{q \to p} g^\Omega_{q_0}(q) =0.\]
\end{proposition}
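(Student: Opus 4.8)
The plan is to imitate the proof of the first assertion of the preceding theorem, replacing $\ssh$ by $\wsh$ and $G^\Omega_{q_0}$ by $g^\Omega_{q_0}$ throughout. Concretely, I would exhibit a single function $u\in\wsh_{q_0}(\Omega)$ with $u\le 0$ on $\Omega$ and $\lim_{q\to p}u(q)=0$ for every $p\in\partial\Omega$, and then conclude by a squeeze. Indeed, membership $u\in\wsh_{q_0}(\Omega)$ forces $u\le w\le w^{*}=g^\Omega_{q_0}$ by the very definition of $g^\Omega_{q_0}$ as the upper semicontinuous regularization of the supremum $w$ over $\wsh_{q_0}(\Omega)$, while the negativity built into $\wsh_{q_0}(\Omega)$ gives $w\le 0$ and hence $g^\Omega_{q_0}\le 0$. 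Thus $u\le g^\Omega_{q_0}\le 0$ on $\Omega$, and letting $q\to p$ pins $g^\Omega_{q_0}$ between two quantities that both tend to $0$.

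For the barrier I would reuse the earlier construction almost verbatim: fix $r<R$ with $B(q_0,r)\subseteq\Omega\subseteq B(q_0,R)$, choose $c>0$ large enough that $c\rho<\log\frac{r}{R}$ on the compact set $\overline{B(q_0,r)}$ (possible since $\rho$ is continuous and strictly negative there), and set $u$ equal to $\log\frac{|q-q_0|}{R}$ on $\overline{B(q_0,r)}$ and to $\max\{c\rho,\log\frac{|q-q_0|}{R}\}$ on $\Omega\setminus B(q_0,r)$. The three defining conditions of $\wsh_{q_0}(\Omega)$ are then checked as before: $u<0$ because $|q-q_0|<R$ and $\rho<0$; the logarithmic pole holds because $u-\log|q-q_0|\equiv-\log R$ near $q_0$; and the only point needing real attention is weak subharmonicity of the patched $u$. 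This last property is now \emph{easier} than in the strong case, since it is tested one slice at a time: on each $L_I$ the two pieces glue along the circle $\{z\in L_I:|z-q_0|=r\}$, where $c\rho<\log\frac{r}{R}=\log\frac{|z-q_0|}{R}$ by the choice of $c$, so the classical gluing lemma for subharmonic functions of one complex variable makes $u_I$ subharmonic on $\Omega_I$.

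It remains to read off the boundary behaviour of $u$. The decisive input is the exhaustion hypothesis on $\rho$: since $\{q:\rho(q)<c'\}\subset\subset\Omega$ for every $c'<0$, any sequence in $\Omega$ approaching $\partial\Omega$ eventually leaves each such relatively compact sublevel set, which forces $\rho(q)\to 0$ as $q\to p$ for each $p\in\partial\Omega$. Because $\overline{B(q_0,r)}\subset\subset\Omega$, points near $p$ lie outside $B(q_0,r)$, where $u=\max\{c\rho,\log\frac{|q-q_0|}{R}\}$; as $q\to p$ the first entry tends to $0$ and the second to $\log\frac{|p-q_0|}{R}\le 0$, so $u(q)\to 0$. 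Combined with $u\le g^\Omega_{q_0}\le 0$, this yields $\lim_{q\to p}g^\Omega_{q_0}(q)=0$, as claimed.

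I expect the only genuine content to be the two verifications just highlighted: translating the exhaustion condition into boundary decay of $\rho$, and confirming slice by slice that the glued barrier is weakly subharmonic. Both are routine once one observes that weak subharmonicity is checked in each plane $L_I$ separately and that the relatively compact sublevel sets of $\rho$ confine any boundary-approaching sequence to the region where $\rho$ is close to $0$. Everything else is a transcription of the argument already established for $G^\Omega_{q_0}$; note in particular that here one only needs a \emph{weakly} subharmonic competitor, so no control on the full Hessian $H_{I,J}(u)$ is required.
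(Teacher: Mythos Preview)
Your proposal is correct and follows essentially the same approach as the paper's own proof: the barrier $u$ is the identical patched function, and the squeeze $u\le g^\Omega_{q_0}\le 0$ is the same. You simply spell out in more detail two points the paper leaves implicit---the slice-by-slice gluing that yields $u\in\wsh(\Omega)$ and the translation of the exhaustion hypothesis into $\rho(q)\to 0$ along boundary-approaching sequences.
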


\begin{proof}
Let $B(q_0,r) \subseteq \Omega \subseteq B(q_0,R)$ and let $c>0$ be such that $c \rho < \log \frac r R$ in $\overline{B(q_0,r)}$. If we set
\begin{equation*}
u(q) =
\left\{
\begin{array}{ll}
\log\frac{|q-q_0|}{R} & q \in \overline{B(q_0,r)}\\
\max\left\{c \rho(q), \log\frac{|q-q_0|}{R}\right\} & q \in \Omega \setminus B(q_0,r)
\end{array}
\right.
\end{equation*}
then $u \in \wsh_{q_0}(\Omega)$. Thus, $u \leq g^\Omega_{q_0}\leq 0$, where $\lim_{q \to p} u(q) =0$ for all $p \in \partial \Omega$.
\end{proof}

For a special class of domains $\Omega$ and points $q_0$, the Green function $g^\Omega_{q_0}$ can be easily determined, as follows.

\begin{theorem}\label{greenonsymmetricslice}
Let $\Omega$ be a bounded symmetric slice domain and let $x_0 \in \Omega \cap \rr$. Consider the slice $\Omega_i = \Omega \cap \cc$ of the domain and the (complex) Green function of $\Omega_i$ with logarithmic pole at $x_0$, which we will denote as $\gamma^{\Omega_i}_{x_0}$. If we set
\[u(x+Iy) := \gamma^{\Omega_i}_{x_0}(x+iy)\]
for all $x,y\in\rr$ and $I \in \s$ such that $x+Iy\in\Omega$, then:
\begin{itemize}
\item $u$ is a well-defined function on $\Omega$;
\item $u$ is $\jj$-plurisubharmonic in $\Omega \setminus \rr$ and it belongs to $\wsh_{x_0}(\Omega)$;
\item $u$ coincides with $g^\Omega_{x_0}$.
\end{itemize}
\end{theorem}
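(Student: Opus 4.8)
The plan is to show that on every complex slice $\Omega_I = \Omega \cap L_I$ the function $u$ coincides with the classical (complex) Green function $\gamma^{\Omega_I}_{x_0}$ of the planar domain $\Omega_I$ with pole at $x_0$, and then to read off all three assertions from one-variable potential theory. The basic tool is the family of slice isomorphisms $\phi_I \colon L_I \to \cc$, $x + Iy \mapsto x + iy$, which are $\rr$-algebra isomorphisms sending $I$ to $i$ and are therefore biholomorphic from $(L_I, I)$ to $(\cc, i)$. Because $\Omega$ is a symmetric slice domain, $\phi_I$ carries $\Omega_I$ biholomorphically onto the connected planar domain $\Omega_i$, and since $x_0$ is real it is a fixed point, $\phi_I(x_0) = x_0$. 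By construction $u_I = \gamma^{\Omega_i}_{x_0} \circ \phi_I$, so conformal invariance of the complex Green function gives $u_I = \gamma^{\Omega_I}_{x_0}$. For well-definedness I note that a non-real $q$ has exactly the two representations $x + Iy$ and $x + (-I)(-y)$; the resulting ambiguity is resolved by the conjugation symmetry $\gamma^{\Omega_i}_{x_0}(x + iy) = \gamma^{\Omega_i}_{x_0}(x - iy)$, valid because $\Omega_i$ is symmetric about $\rr$ and the pole is real, while on $\rr$ there is nothing to check. Equivalently, $u = \gamma^{\Omega_i}_{x_0} \circ \psi$ with $\psi(q) = \mathrm{Re}(q) + i|\mathrm{Im}(q)|$, a continuous map of $\Omega$ into $\Omega_i$.

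For the second bullet, $\jj$-plurisubharmonicity follows from the characterization recalled in Section~\ref{sectionsubharmonicity}: I take $\upsilon := \gamma^{\Omega_i}_{x_0}|_{D_{\Omega}}$, which is harmonic (hence subharmonic) on $D_{\Omega}$ since $D_{\Omega} \subseteq \Omega_i \cap (\rr + i\rr^+)$ avoids the real pole $x_0$, and $u(x+Iy) = \upsilon(x+iy)$ by definition. To place $u$ in $\wsh_{x_0}(\Omega)$ I check the defining properties. Upper semicontinuity is immediate from $u = \gamma^{\Omega_i}_{x_0} \circ \psi$, a composition of a continuous map with an upper semicontinuous one; weak subharmonicity is the classical subharmonicity of each $u_I = \gamma^{\Omega_I}_{x_0}$ on $\Omega_I$. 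Negativity $u < 0$ is negativity of the Green function on the bounded domain $\Omega_i$. Finally, since $x_0 \in \rr$ one has $|\psi(q) - x_0| = |q - x_0|$, so $u(q) - \log|q - x_0| = \gamma^{\Omega_i}_{x_0}(\psi(q)) - \log|\psi(q) - x_0|$ is the bounded harmonic part of the Green function near its pole, yielding the required finite $\limsup$.

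For the third bullet I use the extremal characterization of the Green function. Membership $u \in \wsh_{x_0}(\Omega)$ already gives $u \le w$, hence $u \le g^\Omega_{x_0}$. Conversely, fix any competitor $v \in \wsh_{x_0}(\Omega)$ and any $I \in \s$: then $v_I$ is subharmonic and negative on $\Omega_I$, and the logarithmic-pole bound for $v$ restricts to $\Omega_I$ (a $\limsup$ over a subset is no larger), so $v_I(z) - \log|z - x_0|$ is bounded above near $x_0$. The extremal property of the complex Green function therefore gives $v_I \le \gamma^{\Omega_I}_{x_0} = u_I$ on $\Omega_I$. As $I$ is arbitrary and $\Omega = \bigcup_{I \in \s} \Omega_I$, we obtain $v \le u$ throughout $\Omega$, whence $w \le u$. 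Combining, $w = u$; since $u$ is upper semicontinuous, $g^\Omega_{x_0} = w^* = u$.

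I expect the main obstacle to be the transfer of the complex Green function between slices: one must be certain that $\phi_I$ is a genuine biholomorphism of $\Omega_I$ onto $\Omega_i$ fixing $x_0$ (resting on the symmetry and slice-domain hypotheses, which in particular make $\Omega_i$ a connected bounded planar domain on which $\gamma^{\Omega_i}_{x_0}$ is defined), and that both conformal invariance and the extremal characterization of the Green function are invoked legitimately. The secondary delicate points are well-definedness across each sphere $x + y\s$ and upper semicontinuity at the real points of $\Omega$, where all slices meet and the pole sits; both are handled cleanly by the factorization $u = \gamma^{\Omega_i}_{x_0} \circ \psi$.
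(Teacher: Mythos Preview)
Your proof is correct and follows essentially the same route as the paper: well-definedness via the conjugation symmetry of $\gamma^{\Omega_i}_{x_0}$ on the symmetric domain $\Omega_i$, identification of each restriction $u_I$ with the complex Green function $\gamma^{\Omega_I}_{x_0}$ via the slice isomorphism $\phi_I$, and the slice-by-slice extremality argument to pin down $g^\Omega_{x_0}$. Your version is simply more explicit---the factorization $u=\gamma^{\Omega_i}_{x_0}\circ\psi$ for upper semicontinuity and the careful verification of each defining condition of $\wsh_{x_0}(\Omega)$ are details the paper leaves implicit.
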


\begin{proof}
The slice $\Omega_i$ of the domain is a bounded domain in $\cc$. By~\cite[Proposition 6.1.1]{klimek}, the function $\gamma^{\Omega_i}_{x_0}$ is a negative plurisubharmonic function on $\Omega_i$ with a logarithmic pole at $x_0$. Moreover, since $\Omega_i$ is symmetric with respect to the real axis, it holds $\gamma^{\Omega_i}_{x_0}(x+iy)=\gamma^{\Omega_i}_{x_0}(x-iy)$ for all $x+iy\in\Omega_i$.
It follows at once that $u$ is well-defined, that it belongs to $\wsh_{x_0}(\Omega)$, and that it is $\jj$-plurisubharmonic in $\Omega\setminus\rr$.

Moreover, let us fix any other $v\in \wsh_{0}(\B)$: we can prove that $v(q) \leq u(q)$ for all $q\in\Omega$, as follows. For each $I \in \s$, the inequality $v_I\leq u_I$ holds in $\Omega_I$ because (up to identifying $L_I$ with $\cc$) the function $u_I$ is the (complex) Green function of $\Omega_I$ with logarithmic pole at $x_0$ and $v_I$ is a negative subharmonic function on $\Omega_I$ with a logarithmic pole at $x_0$. As a consequence, $u$ coincides with $g^\Omega_{x_0}$.
\end{proof}


\subsection{A significant example}

An interesting example to consider is that of the unit ball $\B$ with a pole other than $0$. It is natural to address it by means of the \emph{classical M\"obius transformations} of $\B$, namely the conformal transformations $v^{-1}M_{q_0}u$, where $u,v$ are constants in $\partial\B$, $q_0\in\B$ and $M_{q_0}$ is the transformation of $\B$ defined as
\[M_{q_0}(q) := (1-q\bar q_0)^{-1}(q-q_0)\,.\]
The transformation $M_{q_0}$ has inverse $M_{q_0}^{-1}=M_{-q_0}$. It is regular if, and only if, $q_0=x_0\in\rr$; in this case, it is also slice preserving. For more details, see~\cite{poincare,moebius}.

Our first observation can be derived from either Lemma~\ref{greeninequalities} or Theorem~\ref{greenonsymmetricslice}.

\begin{example}
For each $x_0\in\B\cap\rr$, we have
\[G^{\B}_{x_0}(q)\leq g^{\B}_{x_0}(q)=\log\frac{|q-x_0|}{|1-qx_0|}\]
for all $q\in \B$.
\end{example}

The same techniques do not work when the logarithmic pole $q_0$ is not real, as a consequence of the fact that $M_{q_0}$ is not a regular function. Nevertheless, we can make the following observation.

\begin{example}
Let us fix $q_0\in\B\setminus\rr$. We will prove that 
\[\log\frac{\left|q-q_0\right|}{\left|1-q\bar q_0\right|}\leq g^\B_{q_0}(q)\]
by showing that $u(q) = \log\frac{\left|q-q_0\right|}{\left|1-q\bar q_0\right|}$ is weakly subharmonic. We will also prove that: (a) the restriction $u_I$ is harmonic if, and only if, $q_0\in\B_I$; (b) the function $u$ is not strongly subharmonic.
\begin{itemize}
\item We first observe that 
\[u(q)=\log|q-q_0|-\log|q-\tilde q_0|-\log|\bar q_0|\,,\]
with $\tilde q_0:=\bar q_0^{-1} = q_0|q_0|^{-2}$. With respect to any orthonormal basis $1,I,J,IJ$ and to the associated coordinates $z_1,z_2,\bar z_1,\bar z_2$, if we split $q_0,\tilde q_0$ as $q_0 = q_1 + q_2J,\tilde q_0=\tilde q_1 + \tilde q_2J$ with $q_1,q_2,\tilde q_1,\tilde q_2 \in \B_I$ then
\begin{align*}
H_{I,J}(u)_{|_q}&=
\frac{1}{2|q-q_0|^4} 
\left( \begin{array}{cc}
|z_2-q_2|^2 & -(z_1-q_1)(\bar z_2-\bar q_2)\\
-(z_2-q_2)(\bar z_1-\bar q_1) & |z_1-q_1|^2
\end{array}\right)+\\
&-\frac{1}{2\left|q-\tilde q_0\right|^4} 
\left( \begin{array}{cc}
|z_2-\tilde q_2|^2 & -(z_1-q_1)(\bar z_2-\overline{\tilde q_2})\\
-(z_2-\tilde q_2)(\bar z_1-\overline{\tilde q_1}) & |z_1-\tilde q_1|^2
\end{array}\right)\,.
\end{align*}
\item For all $z\in\B_I$ (that is, $z_1=z,z_2=0$) it holds
\begin{align*}
\left(\bar \partial_1 \partial_1 u \right)_{|_z} &= \frac{|q_2|^2}{2 (|z-q_1|^2 + |q_2|^2)^2} - \frac{|q_2|^2 |q_0|^4}{2 \left(\big|z\,|q_0|^2-q_1\big|^2 + |q_2|^2\right)^2}\,.
\end{align*}
If $q_0\in\B_I$ then $q_2=0$ and $\left(\bar \partial_1 \partial_1 u \right)$ vanishes identically in $\B_I$. Otherwise, $\left(\bar \partial_1 \partial_1 u \right)_{|_z}>0$ for all $z \in \B_I$ because
\begin{align*}
&\big|z\,|q_0|^2-q_1\big|^2 + |q_2|^2-|q_0|^2(|z-q_1|^2 + |q_2|^2)\\
&=|z|^2|q_0|^4+|q_0|^2-|q_0|^2|z|^2-|q_0|^4\\
&=|q_0|^2(1-|q_0|^2)(1-|z|^2)>0\,.
\end{align*}
\item In general, $H_{I,J}(u)$ is not positive semidefinite. To see this, let us choose a basis $1,I,J,IJ$ so that $q_1\neq0\neq q_2$ and let us choose $z_1=0,z_2=q_2$. We get
\begin{align*}
H_{I,J}(u)_{|_{q_2J}}&=
\frac{1}{2|q_1|^4} 
\left( \begin{array}{cc}
0 & 0\\
0& |q_1|^2
\end{array}\right)+\\
&-\frac{1}{2\left(|\tilde q_1|^2+|q_2-\tilde q_2|^2\right)^2} 
\left( \begin{array}{cc}
|q_2-\tilde q_2|^2 & q_1(\bar q_2-\overline{\tilde q_2})\\
(q_2-\tilde q_2)\overline{\tilde q_1} & |\tilde q_1|^2
\end{array}\right)\,,
\end{align*}
where $q_2-\tilde q_2\neq0$ by construction.
\end{itemize}
\end{example}

We would now like to consider a different approach, through regular transformations of $\B$. Indeed, the work~\cite{moebius} proved the following facts.
\begin{itemize}
\item The only regular bijections $\B\to\B$ are the so-called \emph{regular M\"obius transformations} of $\B$, namely the transformations $\M_{q_0}*u=\M_{q_0}u$ with $u\in\partial\B, q_0 \in\B$ and
\[\M_{q_0}(q) := (1-q\bar q_0)^{-*}*(q-q_0)\,.\]
Here, the symbol $*$ denotes the multiplicative operation among regular functions and $f^{-*}$ is the inverse of $f$ with respect to this multiplicative operation.
\item For all $q\in\B$, it holds
\[\M_{q_0}(q) = M_{q_0}(T_{q_0}(q))\,,\]
where $T_{q_0}: \B \to \B$ is defined as $T_{q_0}(q) = (1-qq_0)^{-1}q (1-qq_0)$ and has inverse $T_{q_0}^{-1}(q)=T_{\bar q_0}(q)$.
\item $\M_{q_0}$ is slice preserving if, and only if, $q_0=x_0\in\rr$ (in which case, $T_{q_0}={id}_\B$ and $\M_{x_0}=M_{x_0}$).
\end{itemize}
If we fix $q_0\in\B\setminus\rr$ then, by Lemma~\ref{greeninequalities},
\begin{equation}\label{greenball1}
\log \left|\M_{q_0}(q)\right|\leq g^\B_{q_0}(q)\,.
\end{equation}
for all $q\in\B$.
The work~\cite{schwarzpick} proved the quaternionic Schwarz-Pick Lemma and, in particular, the inequality
\begin{align*}
\log|f(q)| \leq \log \left|\M_{q_0}(q)\right|
\end{align*}
valid for all regular $f:\B \to \B$ with $f(q_0)=0$. It is therefore natural to ask ourselves whether an equality may hold in \eqref{greenball1}. However, this is not the case: as a consequence of the next result, inequality~\eqref{greenball1} is strict at all $q$ not belonging to the same slice $\B_I$ as $q_0$. As a byproduct, we conclude that the set
\[\{\log|f| : f : \B \to \hh\mathrm{\ regular,\ } f(q_0)=0\}\]
is not a dense subset of $\wsh_{q_0}(\B)$.

\begin{theorem}
If $q_0\in\B_I$, then
\begin{equation}\label{greenball2}
|M_{q_0}(T_{q_0}(q))|=|\M_{q_0}(q)|\leq|M_{q_0}(q)|\,,
\end{equation}
for all $q\in\B$. Equality holds if, and only if, $q\in\B_I$.
\end{theorem}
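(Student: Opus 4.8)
The plan is to reduce everything to a one-variable concavity comparison along each sphere $x_0 + r\s$. The first equality $|M_{q_0}(T_{q_0}(q))| = |\M_{q_0}(q)|$ is merely the modulus of the recalled identity $\M_{q_0}(q) = M_{q_0}(T_{q_0}(q))$, so the real content is the inequality $|\M_{q_0}(q)| \le |M_{q_0}(q)|$ together with its equality case. Orienting $I$ so that $q_0 = a + bI$ with $a \in \rr$, $b \ge 0$ (and $b>0$ when $q_0 \notin \rr$), I fix $q = x_0 + rK \in \B$ with $x_0 \in \rr$, $r = |Im(q)| \ge 0$, $K \in \s$, and set $t := \langle I, K\rangle \in [-1,1]$. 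The key point is that, restricted to the sphere $x_0 + r\s$, both squared moduli become explicit functions of the single real variable $t$.

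First I would treat the classical side through the elementary identity $|M_{q_0}(q)|^2 = |q - q_0|^2\,|1 - q\bar q_0|^{-2}$. Expanding numerator and denominator on $x_0 + r\s$ with the help of $\langle K, I\rangle = t$ and $Re(IK) = -t$, I expect both to come out affine in $t$ with the \emph{same} linear coefficient, say $|q - q_0|^2 = N_0 - 2rbt$ and $|1 - q\bar q_0|^2 = D_0 - 2rbt$; a short manipulation should then yield the factorization $N_0 - D_0 = (1 - |q_0|^2)(|q|^2 - 1)$, which is negative since $|q_0|, |q| < 1$. Writing $g(t) := |M_{q_0}(q)|^2 = 1 + (N_0 - D_0)(D_0 - 2rbt)^{-1}$ and differentiating twice, the fact that $N_0 - D_0 < 0$ while $D_0 - 2rbt = |1 - q\bar q_0|^2 > 0$ forces $g'' < 0$; so $g$ is strictly concave on $[-1,1]$ whenever $r > 0$.

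Next I would handle the regular side. Since $q_0 \in L_I$, for any $q \in L_I$ the factor $1 - q q_0 \in L_I$ commutes with $q$, whence $T_{q_0}(q) = q$ and $\M_{q_0}(q) = M_{q_0}(q)$; in particular the two boundary values $A := \M_{q_0}(x_0 + rI)$ and $B := \M_{q_0}(x_0 - rI)$ lie in $L_I$ and satisfy $|A|^2 = g(1)$, $|B|^2 = g(-1)$. Plugging these into the representation formula for slice regular functions (see \cite{librospringer}), namely $\M_{q_0}(x_0 + rK) = \tfrac12(A + B) + \tfrac12\,KI\,(B - A)$, and expanding the squared modulus, I expect every term except the cross term to be independent of $K$; the cross term should reduce, again via $Re(IK) = -t$, to $-\tfrac12\langle A + B, B - A\rangle\,t$. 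This would give $h(t) := |\M_{q_0}(q)|^2 = \tfrac12(|A|^2 + |B|^2) - \tfrac12(|B|^2 - |A|^2)\,t$, an \emph{affine} function of $t$ that agrees with $g$ at both endpoints $t = \pm 1$.

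The conclusion is then immediate: $h$ is the secant line of the strictly concave function $g$ joining its values at $t = \pm 1$, so $h(t) \le g(t)$ on $[-1,1]$, with equality precisely at $t = \pm 1$. Translating back, $|\M_{q_0}(q)| \le |M_{q_0}(q)|$, and equality holds exactly when $K = \pm I$, that is when $q \in L_I$, i.e. $q \in \B_I$ (the degenerate case $r = 0$ being covered by $q \in \rr \subset \B_I$). I expect the main obstacle to be purely computational bookkeeping: obtaining the clean factorization of $N_0 - D_0$ and the affine form of $h$ both require careful handling of the noncommutative products and of the $KI$ cross term. The conceptual crux, once both quantities are exhibited as a fractional-linear function of $t$ and its own chord, is simply the concavity of $g$.
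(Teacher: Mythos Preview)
Your proof is correct and follows a genuinely different route from the paper's. The paper proceeds by pure algebra: it rewrites the inequality as $|T_{q_0}(q)-q_0|\,|1-q\bar q_0|\le|q-q_0|\,|1-T_{q_0}(q)\bar q_0|$, uses $|T_{q_0}(q)|=|q|$ to cancel terms and reduce everything to the single condition $\langle T_{q_0}(q)-q,\,q_0\rangle\ge 0$, and then verifies this by writing $q=z_1+z_2J$ with $J\perp I$ and computing that the relevant real part equals $2y_0^2|z_2|^2$ (where $q_0=x_0+Iy_0$). Your argument instead fixes a sphere $x_0+r\s$, parametrizes it by $t=\langle I,K\rangle$, and exhibits $|M_{q_0}(q)|^2$ as a strictly concave fractional-linear function of $t$ while the representation formula forces $|\M_{q_0}(q)|^2$ to be the affine chord joining its endpoint values at $t=\pm1$. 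The paper's computation is shorter and entirely self-contained within quaternionic algebra; your approach is more structural, explaining the inequality as ``secant lies below a strictly concave graph'' and making the equality case ($t=\pm1$, i.e.\ $q\in\B_I$) transparent. It also highlights how the representation formula controls $|\M_{q_0}|$ slice by slice, which may be useful elsewhere. One small correction: your strict concavity requires $rb>0$, not just $r>0$; this is harmless, since the equality characterization in the theorem tacitly assumes $q_0\notin\rr$ (when $b=0$ one has $T_{q_0}=id$ and both sides coincide identically).
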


\begin{proof}
Inequality \eqref{greenball2} is equivalent to 
\[|T_{q_0}(q)-q_0|\,|1-q\bar q_0|\leq|q-q_0|\,|1-T_{q_0}(q)\bar q_0|\,.\]
Since $|T_{q_0}(q)|=|q|$, the last inequality is equivalent to
\begin{align*}
0\leq&\left(|q|^2-2\langle q,q_0 \rangle+|q_0|^2\right)\left(1-2\langle T_{q_0}(q),q_0 \rangle+|q|^2|q_0|^2\right)+\\
&-\left(|q|^2-2\langle T_{q_0}(q),q_0 \rangle+|q_0|^2\right)\left(1-2\langle q,q_0 \rangle+|q|^2|q_0|^2\right)\\
=&\,2\left(-|q|^2-|q_0|^2+1+|q|^2|q_0|^2\right)\langle T_{q_0}(q),q_0 \rangle+\\
&\,2\left(-1-|q|^2|q_0|^2+|q|^2+|q_0|^2\right)\langle q,q_0 \rangle\\
=&\,2(1-|q_0|^2)(1-|q|^2))\langle T_{q_0}(q)-q,q_0 \rangle\,.
\end{align*}
Thus, inequality \eqref{greenball2} holds for $q\in\B$ if, and only if, $0\leq\langle T_{q_0}(q)-q,q_0 \rangle$. This is equivalent to the non-negativity of the real part of $(T_{q_0}(q)-q)\bar q_0=((1-qq_0)^{-1}q(1-qq_0)-q)\bar q_0$ or, equivalently, of the real part of
\begin{align*}
&\left(\overline{(1-qq_0)}q(1-qq_0)-|1-qq_0|^2q\right)\bar q_0\\
&=\overline{(1-qq_0)}(q-q^2q_0-q+qq_0q)\bar q_0\\
&=(1-\bar q_0\bar q)(qq_0q\bar q_0-q^2|q_0|^2)\\
&=qq_0q\bar q_0-q^2|q_0|^2+|q|^2|q_0|^2(\bar q_0q-q\bar q_0)\\
&=(|q_0|^2-\bar q_0^2)(|z_2|^2-z_1z_2J)+(\bar q_0-q_0)z_2J\,,
\end{align*}
where the last equality can be obtained by direct computation after splitting $q$ as $q=z_1+z_2J$, with $z_1,z_2\in\B_I$ and $J\perp I$. If $q_0=x_0+Iy_0$ then the real part of the last expression equals $(x_0^2+y_0^2-x_0^2+y_0^2)|z_2|^2=2y_0^2|z_2|^2$, which is clearly non-negative. Moreover, it vanishes if, and only if, $z_2=0$, i.e., $q\in\B_I$.
\end{proof}

An example wherein inequality \eqref{greenball2} holds and is strict had been constructed in~\cite{volumeindam}. That construction was used to prove that regular M\"obius transformations are not isometries for the Poincar\'e distance of $\B$, defined as
\[\delta_\B(q,q_0):=\frac12\log\left(\frac{1+|M_{q_0}(q)|}{1-|M_{q_0}(q)|}\right)\]
for all $q,q_0\in\B$. Our new inequality~\eqref{greenball2} is equivalent to $\delta_\B(T_{q_0}(q),q_0)\leq\delta_\B(q,q_0)$. In other words, we have proven the following property of the transformation $T_{q_0}$ of $\B$: while all points $q\in\B_I$ are fixed, all points $q\in\B\setminus\B_I$ are attracted to $q_0$ with respect to the Poincar\'e distance.



\end{document}